\DeclareMathOperator{\diff}{\mathrm{d} \! }
\newtheorem{thm}{Theorem}
\newproof{proof}{Proof}
\newdefinition{remark}{Remark}
\journal{Journal of Computational Physics}
\begin{document}


\title{Embedded WENO: a design strategy to improve existing WENO schemes}

\author[tue]{Bart S. van Lith\fnref{fn1}}
\ead{b.s.v.lith@tue.nl}
\author[tue]{Jan H.M. ten Thije Boonkkamp}
\author[tue,phl]{Wilbert L. IJzerman}

\fntext[fn1]{Corresponding author}

\address[tue]{Department of Mathematics and Computer Science, Eindhoven University of Technology - P. O. Box 513, NL-5600 MB Eindhoven, The Netherlands.}
\address[phl]{Philips Lighting - High Tech Campus 44, 5656 AE, Eindhoven, The Netherlands.}

\begin{abstract}
Embedded WENO methods utilize \textit{all} adjacent smooth substencils to construct a desirable interpolation. Conventional WENO schemes under-use this possibility close to large gradients or discontinuities. We develop a general approach for constructing embedded versions of existing WENO schemes. Embedded methods based on the WENO schemes of Jiang and Shu \cite{jiang_shu} and on the WENO-Z scheme of Borges et al. \cite{borges} are explicitly constructed. Several possible choices are presented that result in either better spectral properties or a higher order of convergence for sufficiently smooth solutions. However, these improvements carry over to discontinuous solutions. The embedded methods are demonstrated to be indeed improvements over their standard counterparts by several numerical examples. All the embedded methods presented have no added computational effort compared to their standard counterparts.
\end{abstract}

\begin{keyword}Essentially non-oscillatory \sep WENO \sep high-resolution scheme \sep hyperbolic conservation laws \sep nonlinear interpolation \sep spectral analysis.
\end{keyword}

\maketitle

\section{Introduction}

In a seminal paper in 1987, Harten and Osher introduced the essentially non-oscillatory (ENO) reconstruction technique \cite{harten_osher}. The basic idea of ENO is to construct several different candidate polynomial interpolations and to choose the smoothest approximation to work with. The choice is facilitated by means of smoothness indicators, which become larger as the interpolation varies more rapidly.\par
Building on the ENO scheme, Liu, Osher and Chan introduced the weighted essentially non-oscillatory (WENO) reconstruction technique in 1994 \cite{liu_osher}. The WENO technique comes from the realization that the three approximations of ENO can be combined to construct a higher-order approximation. Instead of the logical statements inherent in the ENO scheme, the WENO scheme weighs every lower-order approximation according to its smoothness indicator. Thus, in smooth regions, WENO gives a better approximation, while reducing to ENO near discontinuities.\par
WENO schemes are ubiquitous in science and engineering, with applications in fluid dynamics, astrophysics, or any other application involving convection-dominated dynamics \cite{shu_FD,qiu}. The technique is mainly applied in the context of hyperbolic and convection-dominated parabolic PDEs. However, since it is a highly advanced interpolation technique, it also has applications in fields that do not use it as part of a PDE solver, such as computer vision and image processing \cite{amat,siddiqi}.\par
The standard WENO scheme as it is most commonly used today was devised by Jiang and Shu \cite{jiang_shu}, and is sometimes referred to as the WENO-JS scheme. Recently, several variants of the WENO scheme have appeared that improve the order of accuracy near points where the first derivative vanishes. Examples include the WENO-M \cite{henrick,feng}, WENO-Z \cite{borges,castro,arandiga} and WENO-NS \cite{ha} schemes. For a comparison of the performance of these schemes, see Zhao et al. \cite{zhao}. Other efforts have focused on creating energy-stable WENO schemes such as those constructed by Yamaleev et al. \cite{yamaleev, yamaleev2}, or decreasing numerical dissipation by using central discretisations such as considered by Hu et al. \cite{hu}.\par
The most common implementations of WENO schemes use a five-point stencil, which can be subdivided into three three-point stencils. WENO schemes switch seamlessly between the third- and fifth-order reconstructions that are possible on the five-point stencil. The idea is straightforward: when all three smoothness indicators are roughly equal, a WENO scheme switches to the fifth-order mode. When one or more smoothness indicators are large, a WENO scheme switches to the third-order mode.\par
In this formulation, it seems obvious that information is discarded when only one out of three smoothness indicators is large. When this happens, the two smooth approximations could still be used to obtain better accuracy. The current WENO methods do not allow for control over the numerical solution in this situation. However, one very recent scheme which does feature this type of functionality is the targeted ENO scheme of Fu et al. \cite{fu}. Their approach is completely novel and uses a combination of ideas from ENO and WENO schemes. In our work, we propose a design strategy that aims to adapt existing WENO schemes such that they utilise the maximum number of grid points that form a smooth substencil. Moreover, we shall explicitly construct variants of two existing WENO schemes that exhibit this property.\par
Apart from the order of convergence, one can also analyse a WENO scheme in terms of its spectral properties \cite{jia}. WENO schemes switch non-linearly between linear modes of operation and as such, it is possible to investigate the spectral properties by analysing the underlying linear schemes \cite{martin}. We will also show that our method allows for tuning of spectral properties such as dispersion and dissipation.\par
This paper is arranged in the following way: in Section \ref{sec:WENO_recap} we give a short recap of WENO methods, in Section \ref{sec:embedded_WENO} we introduce the embedding method, which is implemented in Section~\ref{sec:implementation}. In Section \ref{sec:spectral} we look at the spectral properties of the embedded schemes and in Section \ref{sec:numerical_experiments} we show results of several numerical experiments. Finally, we present our conclusions and outlook in Section \ref{sec:conclusion}.

\section{The classical WENO scheme}\label{sec:WENO_recap}
The WENO method is an advanced interpolation technique that aims to suppress spurious oscillations. It is commonly used as part of a high-resolution scheme for hyperbolic conservation laws, e.g.,
\begin{equation}
\frac{\partial u}{\partial t} + \frac{\partial}{\partial x} f(u)  = 0,
\end{equation}
where $f$ is the flux function. To obtain numerical solutions, we introduce a grid, $\{ x_j \}_{j=1}^N$, with grid size $\Delta x$. With each point $x_j$, we associate a cell centred on $x_j$ of width $\Delta x$, i.e., the interval $(x_{j-\frac{1}{2}},x_{j+\frac{1}{2}})$. Taking the average of the conservation law over cell $j$, we find
\begin{equation}\label{eq:finite_volume}
\frac{\diff u_j}{\diff t} + \frac{1}{\Delta x} \left( f(u(x_{j+\frac{1}{2}},t)) - f(u(x_{j-\frac{1}{2}},t)) \right) = 0,
\end{equation}
where $u_j$ is the average value of $u$ over cell $j$. Note that this ODE for the average value $u_j$ is exact as long as we know the exact value of $u$ on the cell boundaries. We shall, in the following, suppress the explicit time dependence of $u$, as we interpolate $u$ in space for fixed time. In a numerical scheme, we introduce a numerical flux function to represent the fluxes on the cell edges. Regardless of the choice of numerical flux, we require the value of $u$ at the cell interfaces $x_{j \pm \frac{1}{2}}$, i.e. $u(x_{j \pm \frac{1}{2}})$. However, if $u$ is discontinuous and we would naively use polynomial interpolation, we inadvertently introduce spurious oscillations. A (W)ENO scheme is a more advanced interpolation technique that is designed to suppress these oscillations.\par
The classical WENO scheme, or WENO-JS, can be constructed by considering a five-point stencil around $x_j$, i.e., $S = \{ x_{j-2}, x_{j-1},x_j , x_{j+1}, x_{j+2} \}$. The large stencil can be divided into three smaller substencils, viz., $S_0 = \{ x_{j-2}, x_{j-1},x_j\}$, $S_1 = \{x_{j-1},x_j , x_{j+1} \}$ and $S_2 = \{x_j , x_{j+1}, x_{j+2} \}$; see Figure~\ref{fig:WENO_stencils}.

\begin{figure}
\centering
\includegraphics[width = .5\textwidth]{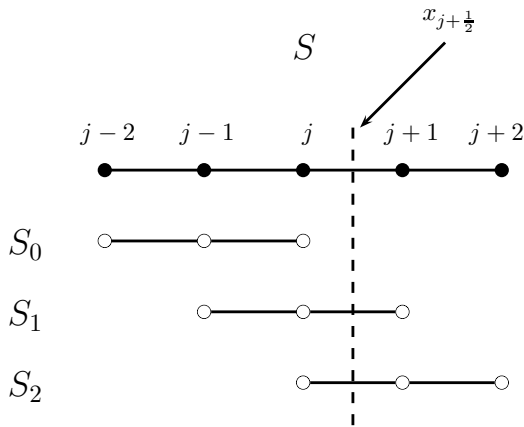}
\caption{The five-point stencil $S$, with substencils $S_0$, $S_1$ and $S_2$. Note that the stencil is asymmetric around the interpolation point.}
\label{fig:WENO_stencils}
\end{figure}

On each of these substencils, $S_k$ with $k=0,1,2$, we can approximate $u(x_{j+\frac{1}{2}})$ by constructing a second-degree polynomial $p_k$ that has the same cell averages as $u$, i.e.,
\begin{equation}
\frac{1}{\Delta x} \int\limits_{x_{j-\frac{1}{2}}}^{x_{j+\frac{1}{2}}} p_k(x) \diff x = u_j.
\end{equation}
Introducing the auxiliary vector $\mathbf{v} = (u_{j-2},u_{j-1},u_j,u_{j+1},u_{j+2})^T $ representing the averages on the large stencil $S$, the three lower-order approximations of $u(x_{j+\frac{1}{2}})$ can be represented as
\begin{equation}\label{eq:third_order_approximations}
\mathbf{u}_{j+\frac{1}{2}} = C \mathbf{v},
\end{equation}
with $C$ a $3 \times 5$ matrix. It is straightforward to show that one can obtain a fifth-order approximation by taking a linear combination of the third-order approximations in \eqref{eq:third_order_approximations}. The fifth-order upwind approximation is therefore given by
\begin{equation}\label{eq:fifth_linear_weights}
u^{(UW5)}_{j+\frac{1}{2}}  = \boldsymbol{\gamma}^T C \mathbf{v},
\end{equation}
with $\boldsymbol{\gamma}$ being a column vector. The matrix $C$ and the linear, also called optimal, weights $\boldsymbol{\gamma}$, can be represented in a tableau inspired by Butcher Tableaux \cite{butcher},
\begin{equation}
\begin{array}{c|c}
C & \boldsymbol{\gamma} \\ \hline
\end{array}.
\end{equation}
Organised this way, it contains all the coefficients involved in a WENO scheme, thus giving a concise overview of the underlying linear method. The tableau for the five-point WENO scheme looks as follows \cite{jiang_shu},
\begin{equation}\label{eq:WENO5_tableau}
\begin{array}{ccccc|c}
\frac{2}{6} & -\frac{7}{6} & \frac{11}{6} & & & \frac{1}{10} \\ \rule{0pt}{2.6ex}
& -\frac{1}{6} & \frac{5}{6} & \frac{2}{6} & & \frac{6}{10}  \\ \rule{0pt}{2.6ex}
& & \frac{2}{6} & \frac{5}{6} & -\frac{1}{6} & \frac{3}{10} \\
\hline
\end{array} \begin{matrix} \vphantom{x} \\ \vphantom{y} \\ \vphantom{z}.\end{matrix}
\end{equation}
The previous discussion shows how a fifth-order linear approximation can be constructed from three third-order underlying approximations. However, whenever there is a discontinuity on the stencil, the fifth-order approximation incurs spurious oscillations and a third-order approximation might actually be better in some sense. Thus, a set of nonlinear weights are needed that take into account the smoothness of each third-order approximation.\par
This idea can be realized by introducing smoothness indicators $\beta_k$, $k=0,1,2$. There are several smoothness indicators available in the literature \cite{fan_shen,zhang_shu}, each one exhibiting some desirable property. A very popular set of indicators, however, was introduced by Jiang and Shu \cite{jiang_shu} and is given by
\begin{equation}
\beta_k :=  \int\limits_{x_{j-\frac{1}{2}}}^{x_{j+\frac{1}{2}}} \left(  p_k^{\prime \prime} (x) \right)^2 \Delta x^3 + \left( p_k^{\prime} (x) \right)^2 \Delta x \diff x.
\end{equation}
A lengthy but straightforward calculus exercise shows that
\begin{subequations}\label{eq:smoothness_definition}
\begin{align}
\beta_0 &= \frac{13}{12}(u_{j-2} -2 u_{j-1} + u_j)^2 + \frac{1}{4}(u_{j-2} -4 u_{j-1} +3 u_j)^2,\\
\beta_1 &= \frac{13}{12}(u_{j-1} -2 u_{j} + u_{j+1})^2 + \frac{1}{4}(u_{j-1} - u_{j+1})^2 ,\\
\beta_2 &= \frac{13}{12}(u_{j} -2 u_{j+1} + u_{j+2})^2 + \frac{1}{4}(3u_{j} - 4 u_{j+1} + u_{j+2})^2,
\end{align}
\end{subequations}
where one can recognise undivided finite differences. Provided that $u$ is sufficiently smooth, a Taylor expansion reveals that $\beta_k = \mathcal{O}(\Delta x^2)$, where the coefficients of the expansion contain various derivatives of $u$, either squared or multiplied with higher order derivatives, i.e.,
\begin{subequations}\label{eq:smoothness_taylor}
\begin{align}
\beta_0 &= (u^{\prime}_j)^2 \Delta x^2 + \big(\tfrac{13}{12} (u^{\prime \prime}_j)^2 - \tfrac{2}{3} u_j^\prime u_j^{\prime \prime \prime}  \big) \Delta x^4 - \big(\tfrac{13}{6} u_j^{\prime \prime} u_j^{\prime \prime \prime} - \tfrac{1}{2} u_j^\prime u_j^{\prime \prime \prime \prime}  \big) \Delta x^5 + \mathcal{O}(\Delta x^6),\\
\beta_1 &= (u_j^\prime)^2 \Delta x^2 + \big( \tfrac{13}{12} (u^{\prime \prime}_j)^2 + \tfrac{1}{3} u_j^\prime u_j^{\prime \prime \prime}  \big) \Delta x^4 + \mathcal{O}(\Delta x^6),\\
\beta_2 &= (u^{\prime}_j)^2 \Delta x^2 + \big(\tfrac{13}{12} (u^{\prime \prime}_j)^2 - \tfrac{2}{3} u_j^\prime u_j^{\prime \prime \prime}  \big) \Delta x^4 + \big(\tfrac{13}{6} u_j^{\prime \prime} u_j^{\prime \prime \prime} + \tfrac{1}{2} u_j^\prime u_j^{\prime \prime \prime \prime}  \big) \Delta x^5 + \mathcal{O}(\Delta x^6),
\end{align}
\end{subequations}
where $u_j^\prime$ is shorthand for $ \partial_x u(x_j)$, etc. Whereas an ENO scheme uses a logical statement to select the interpolation with the lowest smoothness indicator, a WENO scheme proposes to use a convex combination of the third-order interpolations, much like \eqref{eq:fifth_linear_weights}. To this end, the nonlinear weights $\omega_k$ are introduced, which are functions of the smoothness indicators. Collecting the nonlinear weights into a column vector $\boldsymbol{\omega}$, a WENO scheme uses a linear combination of the form
\begin{equation}\label{eq:convex_combination}
u_{j+\frac{1}{2}}^\mathrm{(WENO)}  = \boldsymbol{\omega}^T C \mathbf{v}.
\end{equation}
Consistency requires that the nonlinear weights $\omega_k$ ($k=0,1,2$) sum to unity. Hence, to construct nonlinear weights that satisfy the requirements discussed earlier, we first compute the unnormalized nonlinear JS weights, indicated with a superscript JS, as
\begin{equation}\label{eq:unnormalised_JS}
\tilde{\omega}_k^\mathrm{JS} = \frac{\gamma_k}{(\beta_k + \varepsilon)^p},
\end{equation}
with $\varepsilon>0$ a small number to avoid division by zero and $p>0$. Typical values are $\varepsilon = 10^{-6}$ and $p=2$. In any WENO scheme the unnormalized weights are subsequently normalised to obtain the nonlinear weights
\begin{equation}\label{eq:normalised_JS}
\omega_k = \frac{\tilde{\omega}_k}{\sum\limits_{l=0}^2\tilde{\omega}_l}.
\end{equation}
The WENO-JS scheme gives fifth-order accuracy whenever $u$ is smooth, i.e.,  $u^\prime_j = \mathcal{O}(1)$ and consequently $\beta_k = \mathcal{O}(\Delta x^2)$, or if $\varepsilon$ is sufficiently large compared to the second-order terms in the expansions \eqref{eq:smoothness_taylor}, otherwise only third-order is attained \cite{henrick}. At the same time, it gives third-order accuracy whenever a substencil contains a discontinuity, since then the corresponding smoothness indicator becomes large. By choosing instead to use only one of the smooth substencils, oscillations are suppressed.\par
A modern incarnation of the WENO scheme is given by the WENO-Z scheme of Borges et al. \cite{borges}, who showed that a sufficient condition for fifth-order accuracy is
\begin{equation}\label{eq:WENO_sufficient}
\omega_k = \gamma_k + \mathcal{O}(\Delta x^3).
\end{equation}
The WENO-JS scheme attains $\omega_k = \gamma_k + \mathcal{O}(\Delta x^2)$, however it does satisfy a more complicated condition ensuring fifth-order accuracy. Unfortunately, near critical points, the WENO-JS scheme only provides third-order accuracy as pointed out by Henrick et al. \cite{henrick}. WENO-Z was designed to satisfy \eqref{eq:WENO_sufficient} and thereby restore optimal convergence near critical points. The unnormalised weights, indicated with a superscript Z, are given by
\begin{equation}\label{eq:WENO-Z_weights}
\tilde{\omega}^Z_k = \gamma_k \left( 1 + \left(\frac{\tau}{\beta_k + \varepsilon} \right)^p \right),
\end{equation}
where $\tau = | \beta_2 - \beta_0|$ is called the global smoothness indicator. Using \eqref{eq:smoothness_taylor}, one can show that $\tau = \mathcal{O}(\Delta x^5)$ and so WENO-Z satisfies the sufficient condition \eqref{eq:WENO_sufficient} for any $p \geq 1$.\par
WENO schemes are commonly employed in a method of lines (MOL) approach, where one leaves time continuous while discretising space. This approach then turns a PDE into a large number of coupled ODEs, resulting in a system of equations
\begin{equation}
\frac{\diff \mathbf{u}}{\diff t} = L(\mathbf{u}),
\end{equation}
where $L$ is the result of the application of the WENO scheme. After the spatial discretisation, one discretises time by setting time levels $t^n = n \Delta t$, $n =0,1,\ldots$. The time integrators of choice are the strong stability preserving Runge-Kutta methods (SSPRK) \cite{gottlieb,gottlieb_review}. These are explicit Runge-Kutta methods that have a high order of accuracy and do not incur spurious oscillations due to time integration. Throughout this paper, we shall use the fairly standard SSPRK(3,3) method, one time step of this method is given by
\begin{subequations}\label{eq:TVD-RK}
\begin{align}
\mathbf{u}^{(1)} &= \mathbf{u}^n + \Delta t L(\mathbf{u}^n),\\
\mathbf{u}^{(2)} &= \tfrac{3}{4} \mathbf{u}^n + \tfrac{1}{4} \mathbf{u}^{(1)} + \tfrac{1}{4} \Delta t L (\mathbf{u}^{(1)}) ,\\
\mathbf{u}^{n+1} &= \tfrac{1}{3} \mathbf{u}^n + \tfrac{2}{3} \mathbf{u}^{(2)} + \tfrac{2}{3} \Delta t L(\mathbf{u}^{(2)}) ,
\end{align}
\end{subequations}
where $\mathbf{u}^{(1)}$ and $\mathbf{u}^{(2)}$ are the intermediate stages. This method exhibits the strong stability preserving property and provides a third-order accuracy in time. Moreover, Wang and Rong \cite{wang} have shown that this method is linearly stable when combined with a five-point WENO scheme.

\section{Embedded WENO}\label{sec:embedded_WENO}
We now pose the question of what happens when the solution on two adjacent substencils is smooth with no critical points and the third one contains a discontinuity. Specifically, either the solution is smooth on $S_0$ and $S_1$ and not smooth on $S_2$, or the solution is smooth on $S_1$ and $S_2$ and not on $S_0$. The answer is that the WENO-JS scheme provides third-order accuracy while suppressing oscillations. However, the scheme generates a linear combination of the two smooth substencils that is forced by the fifth-order mode, i.e., the user cannot choose the resulting weights. Being able to choose the resulting weights results in direct control over the truncation error and the numerical dissipation and dispersion.\par
As a shorthand whenever the solution is smooth on a substencil $S_k$, we call the substencil smooth. Let us examine the normalised JS weights, indicated with the superscript $\mathrm{JS}$, from the definition \eqref{eq:unnormalised_JS} - \eqref{eq:normalised_JS} we find that
\begin{equation}
\frac{\omega_k^\mathrm{JS}}{\omega_l^\mathrm{JS}} = \frac{\tilde{\omega}_k^\mathrm{JS}}{\tilde{\omega}_l^\mathrm{JS}} = \frac{\gamma_k}{\gamma_l} \left(\frac{\beta_l }{\beta_k } \right)^p,
\end{equation}
where we have assumed $\varepsilon$ is negligible compared to the smoothness indicators. Thus, the proportions of the nonlinear weights only depend on the local smoothness of $S_k$ and $S_l$, as one has for $k \neq l$,
\begin{equation}\label{eq:ratio_smoothness}
\frac{\beta_l}{\beta_k} =  
\begin{cases}
1 + \mathcal{O}(\Delta x^3) & \text{if } k=0,l=2 \text{ or } k=2,l=0,\\
1 + \mathcal{O}(\Delta x^2) & \text{otherwise},
\end{cases} 
\end{equation}
which follows from the Taylor expansions of the smoothness indicators \eqref{eq:smoothness_taylor} provided $S_k$ and $S_l$ are smooth with no critical points. Therefore, the ratios of the nonlinear weights satisfy
\begin{equation}\label{eq:proportions}
\frac{\omega_k^\mathrm{JS}}{\omega_l^\mathrm{JS}}  = \frac{\gamma_k}{\gamma_l}  \left( 1+ \mathcal{O}\left(\Delta x^s\right) \right),
\end{equation}
with $s \geq 2$, provided $\varepsilon$ is much smaller than $\beta_k$ and $\beta_l$.\par
A similar computation for the WENO-Z weights, indicated with a superscript $\mathrm{Z}$, shows that this relation also holds, i.e.,
\begin{equation}
\frac{\omega_k^Z}{\omega_l^Z} = \frac{\gamma_k \left(1 + (\frac{\tau}{\beta_k})^p \right) }{\gamma_l \left(1 + (\frac{\tau}{\beta_l})^p \right) } = \frac{\gamma_k \left(\beta_l^p + \tau^p (\frac{\beta_l}{\beta_k})^p \right) }{\gamma_l \left(\beta_l^p + \tau^p \right) }.
\end{equation}
Again using \eqref{eq:ratio_smoothness}, we find that now independent of the value of $\tau$,
\begin{equation}\label{eq:proportions_Z}
\frac{\omega_k^Z}{\omega_l^Z} =  \frac{\gamma_k}{\gamma_l} \left(1 + \mathcal{O}\left(\Delta x^s\right)\right) ,
\end{equation}
with $s \geq 2$ again with $S_k$ and $S_l$ smooth with no critical points. Once more, this relation only depends on the local smoothness of $S_k$ and $S_l$. Note that when the entire stencil $S$ is smooth, the lower bound is increased to $s \geq 3p$.\par
Now consider $S_0$ and $S_1$ being smooth with no critical points, but $S_2$ contains a discontinuity. In this case, both JS and Z schemes will result in $\frac{\omega_0}{\omega_1} = \frac{\gamma_0}{\gamma_1} + \mathcal{O}(\Delta x^2)$. This leads to $\omega_0 \approx \frac{1}{7}$ and $\omega_1 \approx \frac{6}{7}$ for both JS and Z schemes, from which the WENO approximation becomes
\begin{equation}
u_{j+\frac{1}{2}}^\mathrm{(WENO)} - u(x_{j+\frac{1}{2}}) = \tfrac{1}{28} u^{(3)}_j \Delta x^3 + \mathcal{O}(\Delta x^4),
\end{equation}
by a Taylor expansion of the third-order approximations. However, this may not be the optimal choice of weights, as it leads to a third-order linear combination while, for instance, a fourth-order combination is possible if $\omega_0 \approx \frac{1}{4}$ and $\omega_1 \approx \frac{3}{4}$ for this situation.\par 
Relations \eqref{eq:proportions} and \eqref{eq:proportions_Z} therefore show the flaw that is addressed by this work: when the large stencil $S$ is not smooth, WENO-JS and WENO-Z immediately revert to lowest-order modes, even when there are multiple adjacent smooth substencils. In such cases, being able to choose the resulting linear combination has some advantages. If anything, it allows for more control over the numerical solution. More control over the numerical solution in this case means reducing dissipation and increasing resolution.\par
We propose a technique that allows for a choice of the resulting nonlinear weights in the situation when either $\beta_0 = \mathcal{O}(1)$ or $\beta_2 = \mathcal{O}(1)$ and the other substencils are smooth. Consequently, this allows for direct control over the truncation error of the numerical solution in these situations. We call this new type of scheme an embedded WENO scheme. Similarly to conventional WENO schemes, fifth-order accuracy is demanded whenever the numerical solution is smooth on the entire stencil $S$. Moreover, it should reduce to an ENO scheme when two out of three substencils contain a discontinuity.\par
Let us set the question of how to achieve this aside for the moment and first introduce some terminology. The overall third-to-fifth-order accurate scheme is called the outer scheme. The resulting scheme when there are only two adjacent smooth substencils is called the inner scheme, see Figure~\ref{fig:outer_inner_sketch}. For instance, an obvious choice is a fourth-order inner scheme in combination with WENO-JS as the outer scheme.\par
Examining Figure~\ref{fig:outer_inner_sketch} more closely, it becomes clear that if $S_2$ contains the discontinuity and $S_0$ and $S_1$ are smooth, then the discontinuity must lie in the interval $(x_{j+1}, x_{j+2})$. Consequently, there are four grid points on which there is a smooth solution to interpolate. From the two remaining substencils, a four-point stencil can be constructed where the inner scheme is defined. When $S_2$ contains the discontinuity, the available four-point stencil is $S_{0,1} := S_0 \cup S_1$. When $S_0$ contains the discontinuity, the four-point stencil is $S_{1,2} := S_1 \cup S_2$ to use for the inner scheme. \par

\begin{figure}
\centering
\includegraphics[width = .7\textwidth]{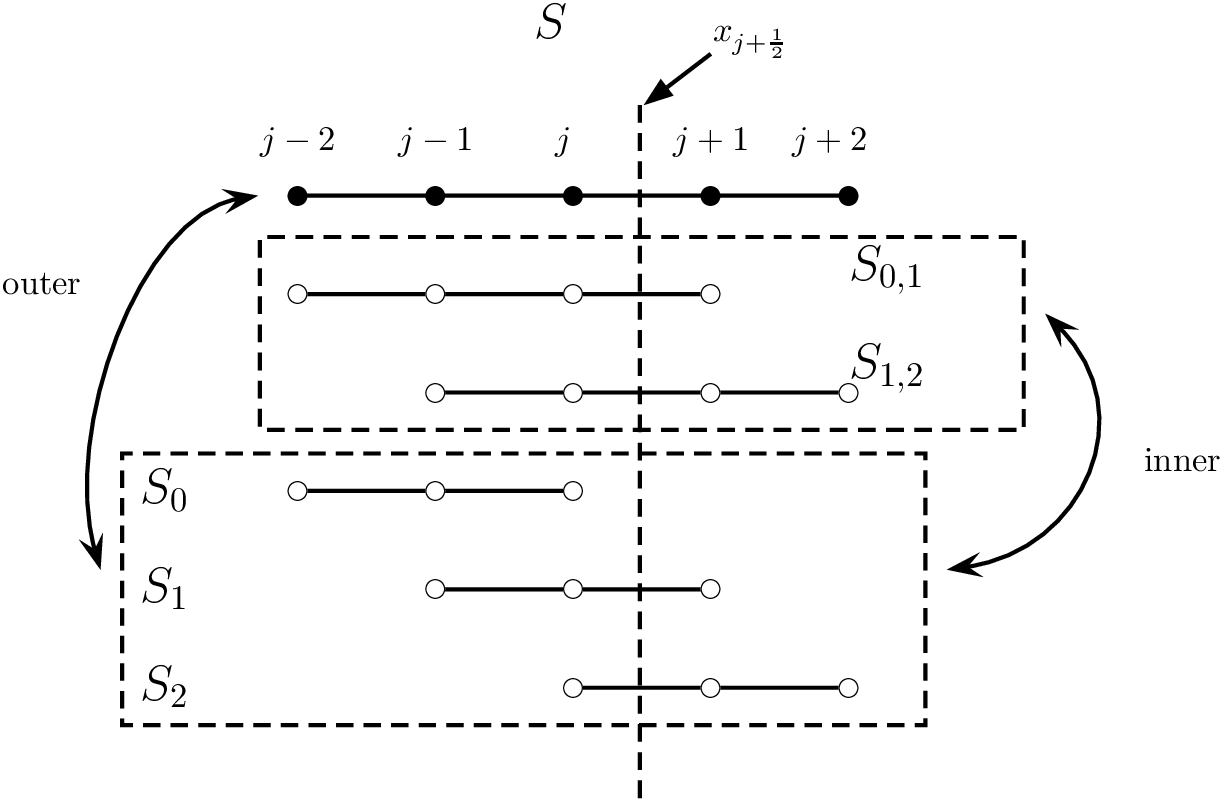}
\caption{The five-point stencil $S$, with substencils $S_0$, $S_1$ and $S_2$, and inner scheme stencils $S_{0,1}$ and $S_{1,2}$.}
\label{fig:outer_inner_sketch}
\end{figure}

Even though a higher formal order of convergence may be obtained, Banks et al. \cite{banks} have pointed out that one often obtains sublinear convergence near linearly degenerate discontinuities, such as the contact waves of the Euler equations. They estimate that the convergence rate becomes $\frac{m}{m+1}$ for a scheme with formal convergence rate $m$. In our case, this suggests the convergence rate is increased from $\frac{3}{4}$ to $\frac{4}{5}$. Thus, the benefits might be less great as a naive estimate would suggest. However, aside from the increased convergence rate, we will also show how embedded schemes can be used to improve spectral properties.\par
With the terminology in place we can turn to the basic question: how to embed one WENO scheme into another. Thus, we would like the nonlinear weights to converge to the inner scheme whenever there are two adjacent smooth substencils and the third one is not smooth. Otherwise, they should remain approximately equal to the nonlinear weights of the outer scheme. This suggests that we multiply the unnormalised weights $\tilde{\omega}_k$ of the outer scheme by a correction that is ordinarily close to unity, but activates when either $\beta_0 $ or $\beta_2$ becomes $\mathcal{O}(1)$. The correction is constructed such that it adjusts the proportions found in \eqref{eq:proportions} and \eqref{eq:proportions_Z}.\par
Suppose the inner scheme is given by the linear weights $\alpha_0^{(2)}$, $\alpha_{1}^{(2)}$, $\alpha_{1}^{(0)}$ and $\alpha_2^{(0)}$. We write the stencils containing a discontinuity in parenthesis in the superscript and the substencil index in the subscript. The desired convex combination then becomes
\begin{subequations}\label{eq:inner_scheme}
\begin{align}
u^{(0,1)}_{j+\frac{1}{2}} &:= \alpha_0^{(2)} u^{(0)}_{j+\frac{1}{2}} + \alpha_{1}^{(2)} u^{(1)}_{j+\frac{1}{2}}, \\
u^{(1,2)}_{j+\frac{1}{2}} &:= \alpha_{1}^{(0)} u^{(1)}_{j+\frac{1}{2}} + \alpha_{2}^{(0)} u^{(2)}_{j+\frac{1}{2}}.
\end{align}
\end{subequations}
We consider two possible choices for the linear weights of the inner scheme, see Table \ref{tab:inner_scheme_weights}. The first is the fourth-order linear combination which is possible on the four-point stencil. The second choice consists of placing the superfluous weight onto the middle substencil, i.e., using the approximation $u^{(k)}_{j+\frac{1}{2}} \approx u^{(1)}_{j+\frac{1}{2}}$ for $k = 0,2$. The fourth-order choice is motivated from an order-of-convergence perspective, while the third-order choice comes from a spectral point of view, see Section \ref{sec:spectral}.

\begin{table}[h]
\centering
\caption{Possible choices for the inner scheme.}
\label{tab:inner_scheme_weights}
\begin{tabular}{l|ll}
                 & 4th            & 3rd             \\ \hline  \rule{0pt}{\normalbaselineskip}
$\alpha_0^{(2)}$       & $\tfrac{1}{4}$ & $\tfrac{1}{10}$ \\[.2cm]
$\alpha_{1}^{(2)}$ & $\tfrac{3}{4}$ & $\tfrac{9}{10}$ \\[.2cm]
$\alpha_{1}^{(0)}$   & $\tfrac{1}{2}$ & $\tfrac{7}{10}$ \\[.2cm]
$\alpha_2^{(0)}$       & $\tfrac{1}{2}$ & $\tfrac{3}{10}$
\end{tabular}
\end{table}

The nonlinear weights must at all times sum to unity to ensure consistency. Thus, any correction we introduce must be incorporated into the unnormalised nonlinear weights and still work after normalisation.\par 
Furthermore, what is happening in substencil $S_2$ must influence both substencils $S_0$ and $S_1$ and \textit{mutatis mutandis} substencil $S_0$ must influence both $S_1$ and $S_2$. It follows that the corrections must be functions of multiple smoothness indicators and thus enforce that the resulting WENO scheme is nonlocal. As a final note, we have seen from \eqref{eq:proportions} and \eqref{eq:proportions_Z} that the nonlinear weights are simply a redistribution of the linear weights. We therefore have to influence the proportions of the linear weights, hence the linear weights should be multiplied with some relative proportions $c_2$ and $c_0$. The role of the relative proportions will become clear later, they are defined as
\begin{subequations}\label{eq:fourth_order_proportions}
\begin{align}
\alpha_0^{(2)} : \alpha_{1}^{(2)} &= c_2 \gamma_0 : \gamma_1,\\
\alpha_{1}^{(0)} : \alpha_{2}^{(0)} &= \gamma_1 : c_0 \gamma_2.
\end{align}
\end{subequations}
The naming convention is again to label the relative proportions with the index of the substencil that is not smooth. We can thus compute the relative proportions using the inner weights suggested in Table \ref{tab:inner_scheme_weights}, see Table \ref{tab:relative_proportions}.

\begin{table}[h]
\centering
\caption{Relative proportions for the 4th order and 3rd order inner schemes. The 3rd order inner scheme places the superfluous weight on the middle stencil.}
\label{tab:relative_proportions}
\begin{tabular}{c|ccc}
     & 4th & 3rd    \\ \hline  \rule{0pt}{\normalbaselineskip} 
$c_2$ & 2 & $\frac{2}{3}$      \\[.2cm]
$c_0$ & 2 & $\frac{6}{7}$              
\end{tabular}
\end{table}

We shall now briefly summarize the conditions that should be satisfied by an embedding correction.

\begin{enumerate}
\item \textit{(Implementation)} The unnormalised nonlinear weights must be multiplied with a correction.

\item \textit{(Nonlocality)} The corrections cannot be functions of only the local smoothness indicators.

\item \textit{(Consistency)} Wherever the solution is smooth on the full stencil, the embedded scheme must reproduce the original scheme.

\item \textit{(Embedding)} When there is a discontinuity present, the scheme must produce the inner weights on the smooth substencils.
\end{enumerate}

\subsection{General framework}
Here, we construct a general framework for embedded WENO schemes. The implementation and consistency conditions suggest that our correction is ordinarily close to a constant, while according to the nonlocality condition it may not be a function of a single smoothness indicator. Therefore, we propose using a general starting point given by
\begin{equation}\label{eq:EW_general_form}
\tilde{\omega}_k^{(\mathrm{E})} = \tilde{\omega}_k^{(\mathrm{O})} \left( a_{kk} + \sum_{l\neq k} \frac{ a_{kl} \beta_l }{\beta_k + \varepsilon} \right),
\end{equation}
where the outer scheme is denoted with superscript $(\mathrm{O})$ and the embedded scheme with $(\mathrm{E})$. This is probably the simplest possible nonlocal correction: a linear combination of ratios. Here, $a_{kl}$ ($k$ and $l$ in the range $0,1,2$) are a collection of undetermined coefficients and $\varepsilon$ is small constant to avoid division by zero. We shall refer to \eqref{eq:EW_general_form} as the general form of an embedded WENO scheme and the term in parenthesis as the general form of a correction.\par
The consistency condition will give us a set of equations that has to be satisfied by the coefficients $a_{kl}$. It tells us that when the solution is smooth all corrections must be close to $1$. Let us assume that the outer scheme satisfies, whenever the solution is smooth with no critical points, $\omega_i^{(\mathrm{O})} = \gamma_i + \mathcal{O}(\Delta x^q)$, then the corrections must satisfy
\begin{equation}\label{eq:consistency_condition}
a_{kk} + \sum_{l \neq k} \frac{ a_{kl} \beta_l }{\beta_k + \varepsilon} = 1 + \mathcal{O}(\Delta x^q),
\end{equation}
which must hold for all $k = 0,1,2$. Using \eqref{eq:ratio_smoothness}, and assuming $\beta_k = \mathcal{O}(\Delta x^2)$ for all $k=0,\ldots,r-1$, where $r$ is the number of substencils, this yields
\begin{equation}\label{eq:consistency_base}
 \sum_{l=0}^{r-1} a_{kl} = 1, \quad k = 0,1,\ldots,r-1.
\end{equation}
If $q = 2$, this is sufficient to satisfy the consistency condition. If $q>2$, for instance for WENO-Z, the coefficients $a_{kl}$ must also provide linear combinations of smoothness indicators that cancel out the lower order terms in the Taylor expansions \eqref{eq:smoothness_taylor}.\par 
To achieve this, the general form \eqref{eq:EW_general_form} is adjusted. In this case, at least the first term in the error expansion of \eqref{eq:consistency_condition} must vanish. The constant term in the correction must still equal 1, which suggests we adjust the general form to read
\begin{equation}\label{eq:EW_general_form2}
\tilde{\omega}_k^{(\mathrm{E})} = \tilde{\omega}_k^{(\mathrm{O})} \left(1 +   \left( \frac{ \left| \sum_{l =0 }^{r-1}  a_{kl} \beta_l  \right| }{\beta_k + \varepsilon}  \right)^p \right),
\end{equation}
where $r$ is again the number of substencils, $p\geq 1$ and $\varepsilon$ is a small constant to avoid division by $0$. We shall refer to \eqref{eq:EW_general_form2} as the second general form. Setting $\tilde{\omega}_k^\mathrm{(O)} = \gamma_k$, it becomes clear this form can be considered as a generalisation of the WENO-Z weights \eqref{eq:WENO-Z_weights}. Here, at least the lowest-order term from the smoothness indicators \eqref{eq:smoothness_taylor} must vanish, leading to
\begin{equation}\label{eq:consistency_base2}
 \sum_{l=0}^{r-1} a_{kl} = 0, \quad k=0,1,\ldots,r-1.
\end{equation}
Regardless of which general form is chosen, \eqref{eq:EW_general_form} or \eqref{eq:EW_general_form2}, further conditions on the coefficients $a_{kl}$ are obtained by the embedding condition. These can be derived by examining the possible positions of a discontinuity and setting the resulting weights equal to the inner weights.\par 
Although throughout this work we have restricted ourselves to five-point WENO schemes, the conditions \eqref{eq:consistency_base} and \eqref{eq:consistency_base2} holds for any number of substencils $r$. In deriving the embedding equations, we shall also take a more general view.

\begin{thm} \label{thm:embedding_equations} (Embedding equations) Let $\tilde{\omega}_k$ be the unnormalised nonlinear weights of a WENO scheme that has $r$ substencils and satisfies $\frac{\tilde{\omega}_k}{\tilde{\omega}_l} \to \frac{\gamma_k}{\gamma_l}$ as $\Delta x \to 0$ whenever $S_k$ and $S_l$ are smooth. Let the unnormalised embedded WENO weights be given by
\begin{equation}
\tilde{\omega}_k^{(\mathrm{E})} = \tilde{\omega}_k g_k, \quad g_k = a_{kk} + \sum\limits_{l\neq k} \frac{a_{kl} \beta_l}{\beta_k + \varepsilon}, \quad k=0,1,2,\ldots,r-1,
\end{equation}
where $g_k$ is the correction factor from the first general form \eqref{eq:EW_general_form}. Let $K$ be the set of indices such that $\beta_n = \mathcal{O}(1)$ for $n \in K$, i.e. $S_n$ is not smooth, and let $\beta_k \downarrow 0$ for $k \not \in K$ as $\Delta x \to 0$. Then, the embedding equations are given by
\begin{equation}\label{eq:embedding_equations}
 \frac{\gamma_k}{\alpha_k^{(K)}}  \sum_{m \in K} a_{km} =  \frac{\gamma_l}{\alpha_l^{(K)}}  \sum_{n \in K} a_{ln},
\end{equation}
with $k,l \not \in K$, $n,m \in K$. Here $\alpha^{(K)}_k$ are the desired inner weights and $\gamma_k$ the linear weights.
\end{thm}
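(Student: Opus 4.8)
The plan is to work throughout with \emph{ratios} of the embedded weights, since this isolates exactly the quantity the embedding condition constrains and sidesteps the common normalisation factor. The normalised embedded weights are $\omega_k^{(\mathrm{E})} = \tilde{\omega}_k g_k / \sum_i \tilde{\omega}_i g_i$, so the embedding requirement---that the scheme reproduce the inner weights $\alpha_k^{(K)}$ on the smooth substencils---is equivalent to demanding
\[
\frac{\omega_k^{(\mathrm{E})}}{\omega_l^{(\mathrm{E})}} \longrightarrow \frac{\alpha_k^{(K)}}{\alpha_l^{(K)}} \qquad \text{as } \Delta x \to 0,
\]
for every pair $k,l \not\in K$. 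Because this ratio factors as $(\tilde{\omega}_k/\tilde{\omega}_l)(g_k/g_l)$ and the hypothesis gives $\tilde{\omega}_k/\tilde{\omega}_l \to \gamma_k/\gamma_l$ for the smooth pair $S_k,S_l$, the entire problem collapses to evaluating $\lim_{\Delta x\to 0} g_k/g_l$. I would separately record that the \emph{individual} weights are then pinned down: for $m\in K$ the factor $\tilde{\omega}_m g_m$ stays bounded while for smooth $k$ both $\tilde{\omega}_k$ and $g_k$ diverge, so $\omega_m^{(\mathrm{E})}\to 0$, and this together with $\sum_k \omega_k^{(\mathrm{E})}=1$ forces each smooth weight to its inner value $\alpha_k^{(K)}$.

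Next I would establish the dominant balance in $g_k$ for a smooth index $k\not\in K$, adopting the paper's standing convention that $\varepsilon$ is negligible against the smoothness indicators (as already used in deriving \eqref{eq:proportions} and \eqref{eq:proportions_Z}). Splitting the sum in $g_k = a_{kk} + \sum_{l\neq k} a_{kl}\beta_l/(\beta_k+\varepsilon)$ by whether $l\in K$ or $l\not\in K$: the terms with $l\not\in K$ stay bounded, since $\beta_l/\beta_k \to 1$ by \eqref{eq:ratio_smoothness}, whereas each term with $m\in K$ carries a numerator $\beta_m=\mathcal{O}(1)$ over a vanishing denominator $\beta_k\downarrow 0$ and hence blows up like $1/\beta_k$. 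Thus $g_k \sim \beta_k^{-1}\sum_{m\in K} a_{km}\beta_m$ to leading order, and forming the ratio gives
\[
\frac{g_k}{g_l} \longrightarrow \frac{\beta_l}{\beta_k}\cdot\frac{\sum_{m\in K} a_{km}\beta_m}{\sum_{n\in K} a_{ln}\beta_n}.
\]
Invoking $\beta_l/\beta_k\to 1$ once more for the smooth pair removes the prefactor.

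Equating the resulting limit with the required inner ratio and cross-multiplying then yields the stated embedding equations \eqref{eq:embedding_equations}. The step I expect to be the main obstacle is the cancellation of the non-smooth indicators $\beta_m$ ($m\in K$) between the surviving numerator and denominator, which is what turns $\sum_{m}a_{km}\beta_m$ into a multiple of $\sum_m a_{km}$. For the five-point schemes targeted here this is automatic: a single discontinuity that leaves two adjacent substencils smooth can spoil only one substencil, so $K$ is a singleton, the sums collapse to the single term $a_{k,n_0}\beta_{n_0}$, and the common $\beta_{n_0}$ cancels \emph{exactly} in the limit, reproducing \eqref{eq:embedding_equations} with the correct constants. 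For the general $r$-substencil view one must additionally assume that the non-smooth indicators share a common leading magnitude---all being $\mathcal{O}(1)$ and driven by the same jump---so that a common scale may be extracted from both sums and cancelled. I would therefore state this comparability assumption explicitly, observe that it is vacuous in the singleton case relevant to the paper, and present the singleton reduction as the decisive check that the coefficients in \eqref{eq:embedding_equations} are correct.
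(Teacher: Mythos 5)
Your proposal follows essentially the same route as the paper's own proof: form the ratio $\omega_k^{(\mathrm{E})}/\omega_l^{(\mathrm{E})} = (\tilde{\omega}_k/\tilde{\omega}_l)(g_k/g_l)$, extract the dominant $1/\beta_k$ growth of $g_k$ coming from the non-smooth indicators, pass to the limit $\Delta x \downarrow 0$, and equate the result with $\alpha_k^{(K)}/\alpha_l^{(K)}$ to obtain \eqref{eq:embedding_equations}. The differences are refinements in your favour rather than a different method: you additionally argue that the non-smooth weights vanish after normalisation (so the ratio condition really does pin the smooth weights to the inner values), and you make explicit the comparability of the $\beta_m$, $m \in K$, required for the cancellation when $|K|>1$ --- a point the paper silently absorbs into its single constant $C$ --- while correctly observing that this issue is vacuous in the singleton case $K=\{0\}$ or $K=\{2\}$ that the five-point schemes actually use.
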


\begin{proof}
1. Fix some set $K$ and assume that $\varepsilon$ is so small it may be ignored in the analysis. Let $k,l \not \in K$, then the ratio of two embedded weights is given by
\begin{equation*}
\frac{\omega_k^\mathrm{(E)}}{\omega_l^\mathrm{(E)}} = \frac{\tilde{\omega}_k^\mathrm{(E)}}{\tilde{\omega}_l^\mathrm{(E)}} = \frac{\tilde{\omega}_k}{\tilde{\omega}_l} \frac{g_k}{g_l}. \tag{$\ast$}
\end{equation*}
2. For $k \not \in  K$, the correction becomes
\begin{equation*}
g_k = \sum_{n \not \in K} a_{kn} + \sum_{n \in K} a_{kn} \frac{\beta_n}{\beta_k} + \mathcal{O}(\Delta x^s) ,
\end{equation*}
where $s \geq 2$ from \eqref{eq:ratio_smoothness}. Next, we use that $\beta_n = \mathcal{O}(1)$ for $n \in K$, so that we obtain
\begin{equation*}
g_k = \frac{C}{\Delta x^2} \sum_{n \in K} a_{kn} + \mathcal{O}(1),  \tag{$\star$}
\end{equation*}
where $C$ is a constant.\\
3. Next, $(\star)$ is substituted into $(\ast)$ leading to
\begin{equation*}
 \frac{\tilde{\omega}_k^\mathrm{(E)}}{\tilde{\omega}_l^\mathrm{(E)}} = \frac{\tilde{\omega}_k}{\tilde{\omega}_l} \frac{ \sum_{n \in K} a_{kn} + \mathcal{O}(\Delta x^2) }{ \sum_{m \in K} a_{lm} + \mathcal{O}(\Delta x^2) }.
\end{equation*}
We let $\Delta x \downarrow 0$ so that $\frac{\tilde{\omega}_k}{\tilde{\omega}_l} \to \frac{\gamma_k}{\gamma_l}$, therefore
\begin{equation*}
\frac{\omega_k^\mathrm{(E)}}{\omega_l^\mathrm{(E)}} =   \frac{\gamma_k}{\gamma_l} \frac{\sum_{n \in K} a_{kn}   }{ \sum_{m \in K} a_{lm} }.
\end{equation*}
This ratio has to be equal to the ratio of inner weights, so that
\begin{equation*}
\frac{\alpha^\mathrm{(K)}_k}{\alpha^\mathrm{(K)}_l}=   \frac{\gamma_k}{\gamma_l} \frac{\sum_{n \in K} a_{kn}   }{ \sum_{m \in K} a_{lm} },
\end{equation*}
which can be simplified to \eqref{eq:embedding_equations}.  \hfill$\square$
\end{proof} 

\begin{remark} The assumption that the outer weights should satisfy $\frac{\tilde{\omega}_k}{\tilde{\omega}_l} \to \frac{\gamma_k}{\gamma_l}$ as $\Delta x \to 0$ includes the choices of JS weights, Z weights and simply using the linear weights $\tilde{\omega}_k  = \gamma_k$.
\end{remark}

\begin{thm} \label{thm:embedding_equations_form2} Under the same assumptions as Theorem~\ref{thm:embedding_equations} and using the second general form \eqref{eq:EW_general_form2}, the embedding equations are given by
\begin{equation}\label{eq:embedding_equations_form2}
\left( \frac{\gamma_k}{\alpha_k^{(K)}} \right)^{\frac{1}{p}} \sum_{m \in K} a_{km} = \pm \left( \frac{\gamma_l}{\alpha_l^{(K)}} \right)^{\frac{1}{p}} \sum_{n \in K} a_{ln},
\end{equation}
\end{thm}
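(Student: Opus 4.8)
The plan is to re-run the three-step argument of Theorem~\ref{thm:embedding_equations} essentially verbatim, changing only the correction factor. Here the factor coming from the second general form \eqref{eq:EW_general_form2} is
\[
g_k = 1 + \left( \frac{\left| \sum_{l=0}^{r-1} a_{kl}\beta_l \right|}{\beta_k + \varepsilon} \right)^p .
\]
As before I would fix the set $K$ of non-smooth indices, discard $\varepsilon$, and write $\tilde{\omega}_k^{(\mathrm{E})}/\tilde{\omega}_l^{(\mathrm{E})} = (\tilde{\omega}_k/\tilde{\omega}_l)(g_k/g_l)$ for two smooth indices $k,l\not\in K$. The only genuinely new ingredients are the asymptotics of this $g_k$ and the final inversion of a $p$-th power.

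For the asymptotics I would split the numerator $\sum_l a_{kl}\beta_l$ into its smooth part ($l\not\in K$, where each $\beta_l=\mathcal{O}(\Delta x^2)$) and its non-smooth part ($l\in K$, where each $\beta_l=\mathcal{O}(1)$); the latter dominates, so $\big|\sum_l a_{kl}\beta_l\big| = \big|\sum_{n\in K} a_{kn}\beta_n\big| + \mathcal{O}(\Delta x^2)$. Since $\beta_k = (u_j')^2\Delta x^2 + \mathcal{O}(\Delta x^4)$ for $k\not\in K$, the bracketed quotient grows like $\Delta x^{-2}$, its $p$-th power like $\Delta x^{-2p}$, and the additive $1$ is swamped. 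This is the essential contrast with the first general form: the whole correction, and not merely its deviation from a constant, is now governed by the non-smooth coefficients. Following the notation of the previous proof, one obtains
\[
g_k = \left( \frac{C}{\Delta x^2} \right)^p \left| \sum_{n\in K} a_{kn} \right|^p \big( 1 + o(1) \big),
\]
with $C$ a positive constant common to all smooth indices.

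Forming $g_k/g_l$ then cancels the common factor $(C/\Delta x^2)^p$, so on letting $\Delta x\downarrow 0$ (whereby $\tilde{\omega}_k/\tilde{\omega}_l\to\gamma_k/\gamma_l$ by hypothesis) the ratio of embedded weights becomes
\[
\frac{\omega_k^{(\mathrm{E})}}{\omega_l^{(\mathrm{E})}} \longrightarrow \frac{\gamma_k}{\gamma_l} \left| \frac{\sum_{n\in K} a_{kn}}{\sum_{m\in K} a_{lm}} \right|^p .
\]
Equating this with the prescribed inner-weight ratio $\alpha_k^{(K)}/\alpha_l^{(K)}$ and clearing denominators gives
\[
\left| \sum_{n\in K} a_{kn} \right|^p \frac{\gamma_k}{\alpha_k^{(K)}} = \left| \sum_{m\in K} a_{lm} \right|^p \frac{\gamma_l}{\alpha_l^{(K)}} .
\]

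I expect the one genuine subtlety to be the final step, undoing the $p$-th power and the modulus, which is also where the sign $\pm$ originates. Taking the positive $p$-th root of the last display and then removing the absolute values introduces exactly one sign ambiguity and produces \eqref{eq:embedding_equations_form2}. The point to make explicit is that, because the correction is built from an absolute value raised to the power $p$, the sign of each coefficient sum $\sum_{n\in K}a_{kn}$ is invisible to the scheme; inverting the modulus therefore pins these sums down only up to an overall sign, so the $\pm$ is a genuine freedom introduced by the modulus in \eqref{eq:EW_general_form2} rather than an artifact of the derivation. As in Theorem~\ref{thm:embedding_equations}, the cleanest cancellation of the $\mathcal{O}(1)$ values $\beta_n$, $n\in K$, occurs when $K$ is a singleton, which is precisely the five-point case of interest.
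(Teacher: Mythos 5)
Your proof is correct and follows the paper's argument essentially verbatim: the same reduction to the ratio $\tilde{\omega}_k^{(\mathrm{E})}/\tilde{\omega}_l^{(\mathrm{E})} = (\tilde{\omega}_k/\tilde{\omega}_l)(g_k/g_l)$, the same asymptotic splitting in which the non-smooth terms dominate and the additive $1$ is swamped, the same limit $\Delta x \downarrow 0$ giving $\frac{\gamma_k}{\gamma_l}\left|\sum_{n\in K} a_{kn}/\sum_{m\in K} a_{lm}\right|^p$, and the same inversion of the $p$-th power and modulus. Your explicit discussion of the $\pm$ sign freedom is exactly the content of the paper's remark following the theorem, so the two arguments coincide in substance.
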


\begin{proof}
1. Repeating the steps of the previous proof with $k \not \in K$, the correction for the second form now equals
\begin{equation*}
g_k = 1+ \left| \frac{C}{\Delta x^2} \sum_{n \in K} a_{kn} + \sum_{n \not \in K} a_{kn} \left( 1+ \mathcal{O}(\Delta x^2) \right)   \right|^p,
\end{equation*}
so that
\begin{equation*}
\frac{g_k}{g_l} = \frac{| \sum_{n \in K} a_{kn} + \mathcal{O} (\Delta x^2) |^p }{| \sum_{m \in K} a_{lm} + \mathcal{O} (\Delta x^2) |^p}.
\end{equation*}
2. Passing to the limit $\Delta x \downarrow 0$, we find
\begin{equation*}
\frac{\omega_k^\mathrm{(E)}}{\omega_l^\mathrm{(E)}} =   \frac{\gamma_k}{\gamma_l} \left| \frac{\sum_{n \in K} a_{kn} }{\sum_{m \in K} a_{lm}}  \right|^p.
\end{equation*}
Setting the left-hand side equal to $\frac{\alpha^\mathrm{(K)}_k}{\alpha^\mathrm{(K)}_l}$, this yields
\begin{equation*}
\left( \frac{\gamma_k}{\alpha^\mathrm{(K)}_k} \right)^\frac{1}{p} \left| \sum_{n \in K} a_{kn} \right| = \left( \frac{\gamma_l}{\alpha^\mathrm{(K)}_l} \right)^\frac{1}{p} \left| \sum_{m \in K} a_{lm} \right|.
\end{equation*}
Thus, if the coefficients satisfy \eqref{eq:embedding_equations_form2}, the embedded weights will converge to the inner weights. \hfill$\square$
\end{proof}

\begin{remark}
There is a freedom in the choice of sign of the coefficients for the second form \eqref{eq:EW_general_form2}. Interpreting the coefficients as elements of a matrix $A$, any row may be multiplied with $-1$ with impunity. From here on out, we use the positive sign in \eqref{eq:embedding_equations_form2}.
\end{remark}

The embedding equations are a set of linear equations for the coefficients $a_{kl}$, since the inner weights are given or rather chosen by the user. The embedding equations relate the weights of the inner scheme to the linear weights. Together with the equations coming from the consistency condition, this will provide a number of linear equations for the coefficients $a_{kl}$. For five-point WENO schemes, we find that $K$ can be either $\{0\}$ or $\{2\}$, the other cases being already included in the WENO weights. In each case for $K$ there are only two remaining smooth substencils. We thus end up with two equations
\begin{subequations}
\begin{align}
\left( \frac{\gamma_0 }{\alpha^{(2)}_0} \right)^\frac{1}{p} a_{02} &= \left( \frac{\gamma_1 }{\alpha^{(2)}_1} \right)^\frac{1}{p} a_{12},\\
\left(\frac{\gamma_2 }{\alpha^{(0)}_2}\right)^\frac{1}{p} a_{20} &=\left( \frac{\gamma_1 }{\alpha^{(0)}_1}\right)^\frac{1}{p} a_{10},
\end{align}
\end{subequations}
where the $p=1$ equations also apply to the first form \eqref{eq:EW_general_form}. These may be simplified using our earlier definition of the relative proportions $c_0$ and $c_2$ \eqref{eq:fourth_order_proportions}, i.e.,
\begin{subequations}\label{eq:WENO5_embedding_eqs}
\begin{align}
\frac{a_{02}}{a_{12}} &= (c_2)^\frac{1}{p},\\
\frac{a_{20}}{a_{10}} & = (c_0)^\frac{1}{p},
\end{align}
\end{subequations}
where once again, the $p=1$ equations apply to both forms provided $c_2>0$ and $c_0>0$, for the second form one can use $p>1$.

\section{Implementation}\label{sec:implementation}

\subsection{Embedded WENO-JS}
We will now show how to construct embedded WENO schemes using the WENO-JS scheme as an outer scheme. We will assume the inner weights $\alpha_0^{(2)}$, $\alpha_{1}^{(2)}$, $\alpha_{1}^{(0)}$ and $\alpha_2^{(0)}$ are given, e.g., chosen from Table \ref{tab:inner_scheme_weights}. From the inner weights, we can find their relative proportions as measured against the outer weights by \eqref{eq:fourth_order_proportions}, see Table \ref{tab:relative_proportions}. We shall use the general form \eqref{eq:EW_general_form} as a template. Furthermore, we have that $q = 2$, so that \eqref{eq:consistency_base} provides three equations that are sufficient to ensure that the scheme is unaltered when the solution is smooth. The two embedding equations for a five-point WENO scheme are given by \eqref{eq:WENO5_embedding_eqs}. Hence, we have five equations for nine coefficients that can be solved to yield a four-parameter family of embedded schemes, given by
\begin{subequations}
\begin{align}
a_{00} &= 1-a_{01} - a_{02},\\
a_{11} &= 1- \frac{a_{20}}{c_0} - \frac{a_{02} }{c_2},\\
a_{22} &= 1- a_{20} - a_{21},\\
a_{12} &= \frac{a_{02}}{c_2},\\
a_{10} &=\frac{a_{20}}{c_0},
\end{align}
\end{subequations}
where $a_{01}$, $a_{02}$, $a_{20}$ and $a_{21}$ can be chosen freely. We have experimented with a number of possible choices, all seemed to provide improvements over the WENO-JS scheme. However, different choices resulted in schemes with different behaviour, much like choosing a different flux limiter in a TVD scheme.\par
We shall continue with the embedded scheme that appears to have the best all-round performance, it can be constructed using the choices $a_{01}=a_{21}=0$, $a_{20} = \frac{c_0}{3}$ and $a_{02} = \frac{c_2}{3}$. For this particular choice of embedded WENO, we may even choose the linear weights as the outer scheme, such that we obtain
	\begin{subequations}\label{eq:final_form}
		\begin{align}
		\tilde{\omega}_0 &= \tfrac{1}{3}  \gamma_0 \left( 3-c_2 + c_2 \frac{\beta_2}{\beta_0 + \varepsilon}  \right),\\
		\tilde{\omega}_1 &= \tfrac{1}{3} \gamma_1 \left( 1 +  \frac{\beta_2}{\beta_1 + \varepsilon} + \frac{\beta_0}{\beta_1 + \varepsilon} \right),\\
		\tilde{\omega}_2 &=  \tfrac{1}{3} \gamma_2 \left( 3-c_0 + c_0\frac{\beta_0}{\beta_2 + \varepsilon} \right).
		\end{align}
	\end{subequations}
This scheme yields a convex combination when all weights are positive, thus we must have $c_0<3$ and $c_2<3$, which includes the choices presented in Table \ref{tab:relative_proportions}.\par
To show that we may use this choice, we apply a Taylor expansion to the normalised weights, under the assumption of smooth solutions without critical points. A lengthy computation or symbolic calculation will show that the weights from \eqref{eq:final_form} satisfy
\begin{equation}
\omega_k - \gamma_k = \tfrac{1}{3} \left( \omega_k^\mathrm{JS} - \gamma_k \right) + \mathcal{O}(\Delta x^3),
\end{equation}
regardless of the values of $c_0$ and $c_2$. Therefore, the embedded WENO scheme \eqref{eq:final_form} also provides fifth-order convergence. Under the assumption that only one substencil is smooth, the weights \eqref{eq:final_form} also provide the proper behaviour. Indeed, fix $k$ and set $\beta_k = \mathcal{O}(\Delta x^2)$ and $\beta_l = \mathcal{O}(1)$ with $l \neq k$, then we find that $\tilde{\omega}_k = \mathcal{O}(\frac{1}{\Delta x^2})$ and $\tilde{\omega}_l = \mathcal{O}(1)$. Therefore, with only one smooth substencil, we find $\omega_k = 1 + \mathcal{O}(\Delta x^2)$ and $\omega_l = \mathcal{O}(\Delta x^2)$.\par
We conclude that the embedded WENO scheme given by \eqref{eq:final_form} is equivalent to the standard WENO-JS scheme for smooth solutions without critical points or having only a single smooth substencil. When there are two adjacent smooth substencils and the third one is not smooth, we obtain the inner scheme.\par
To verify that the embedded schemes have the same order of convergence for smooth functions, a short test is performed. In \cite{borges}, it is shown that the conservative difference in \eqref{eq:finite_volume} can be interpreted as a differentiation operator if applied to the original function instead of its averages. WENO-JS only features optimal convergence for smooth functions without critical points, therefore the first test consists of applying WENO differentiation to a test function given by
\begin{equation}\label{eq:tanh}
u_1(x) = \tanh\left( 10 x \right).
\end{equation}
The boundary conditions are supplied exactly using fictitious grid points. For functions featuring first-order critical points, WENO-JS provides fourth-order accuracy. Therefore, the second test function is given by
\begin{equation}\label{eq:sin_in_sin}
u_2(x) = \sin \left( \pi x - \frac{\sin(\pi x)}{\pi}  \right).
\end{equation}
In both tests, the error is computed using the scaled absolute sum
\begin{equation}
e = \sum_{j=1}^N \big|D u_j - u^\prime(x) \big| \Delta x,
\end{equation}
where $D$ is the WENO differentiation operator. The parameter $\varepsilon$ is set to $10^{-40}$ and $c_2 = c_0=2$ in this example. The number of grid points $N$ are chosen such that the grid size $\Delta x$ is halved each time. The results are given in Table~\ref{tab:convergence_tanh_EJS} and clearly demonstrate optimal convergence for smooth functions with no critical points and fourth-order convergence for smooth functions with first-order critical points. Thus, the embedded WENO-JS scheme provides the same or similar performance as the original scheme for smooth functions.

\begin{table}
	\centering
		\caption{Convergence test for the embedded WENO-JS scheme using \eqref{eq:tanh} with $c_2= c_0=2$ and $\varepsilon=10^{-40}$.}
		\label{tab:convergence_tanh_EJS}
		\begin{tabular}{l|ll|ll}
			$N$      & error$_1$                & order$_1$ & error$_2$                    & order$_2$  \\	 \hline 
			$101$  & $2.0 \cdot 10^{-4}$ &                    &	$1.6 \cdot 10^{-6}$&  						  \\
			$201$ & $7.1 \cdot 10^{-6}$ & $4.8$      &	$7.2 \cdot 10^{-8}$&  	$4.5$		   \\
			$401$ & $2.3 \cdot 10^{-7}$ & $4.9$      &	$3.8 \cdot 10^{-9}$ &  $4.2$		   \\
			$801$ & $7.3 \cdot 10^{-9}$  & $5.0$       &$2.1 \cdot 10^{-10}$  & $4.2$			  \\
			$1601$&$2.3 \cdot 10^{-10}$ & $5.0$      &$1.3 \cdot 10^{-11}$ & $4.0$			  
		\end{tabular}
\end{table}

\subsection{Embedded WENO-Z}
A more contemporary version of WENO schemes is represented by the WENO-Z scheme of Borges et al. \cite{borges}. As mentioned earlier, the WENO-JS scheme has the property that $\omega_k = \gamma_k + \mathcal{O}(\Delta x^2)$ for smooth solutions, whereas the WENO-Z weights satisfy $\omega_k^\mathrm{Z} = \gamma_k + \mathcal{O}(\Delta x^{3p})$, with $p$ the power parameter. Consequently, at critical points, the WENO-Z scheme avoids loss of convergence. A side-effect of the new weights is faster convergence to the linear weights in smooth regions. This also results in sharper resolution of discontinuities. The unnormalissed weights for the WENO-Z scheme are defined as in \eqref{eq:WENO-Z_weights}.\par
Embedding an inner scheme into the WENO-Z scheme is somewhat easier, since the second general form \eqref{eq:EW_general_form2} is a generalisation of WENO-Z. In the context of our framework, we have to satisfy the consistency conditions \eqref{eq:consistency_base2}, i.e., $\sum_l a_{kl} = 0$ for $k=0,1,2$. At the same time, we can obtain extra equations from \eqref{eq:smoothness_taylor}, where we find the fourth-order term must cancel out as well, i.e.,
\begin{equation}
a_{k0} - \tfrac{1}{2} a_{k1} + a_{k2} = 0, \quad k = 0,1,2.
\end{equation}
By Theorem~\ref{thm:embedding_equations_form2}, the two embedding equations are now given by \eqref{eq:WENO5_embedding_eqs}. Thus, for an embedded version of WENO-Z, we have six equations from consistency and two embedding equations to solve for nine coefficients, yielding a one-parameter family of schemes, given by 
\begin{subequations}\label{eq:WENO-Z_embedded}
\begin{align}
\tilde{\omega}_0 &= \gamma_0 \left( 1 + \mu c_2 \left(\frac{\tau}{\beta_0+\varepsilon} \right)^p  \right),\\
\tilde{\omega}_1 &= \gamma_1 \left( 1 + \mu \left(\frac{\tau}{\beta_1+\varepsilon} \right)^p   \right),\\
\tilde{\omega}_2 &= \gamma_2 \left( 1 + \mu c_0 \left(\frac{\tau}{\beta_2+\varepsilon} \right)^p   \right),
\end{align}
\end{subequations}
where $\mu$ is the free parameter, set to $\frac{1}{4}$ unless mentioned otherwise, and again $\tau = | \beta_0 - \beta_2|$. The scheme given by \eqref{eq:WENO-Z_embedded} is stable for $\mu >0$, $c_0>0$ and $c_2>0$, which includes the options presented in Table \ref{tab:relative_proportions}. The power parameter is set to $p=2$ throughout the rest of this work. \par
As earlier, we perform a convergence test to verify that the embedded WENO-Z scheme has the same order of convergence as its standard counterpart for smooth functions. The details can be found in the previous subsection as the test procedure is exactly the same. WENO-Z with $p=2$ attains optimal convergence for smooth functions that may have first-order critical points. Only third-order accuracy is attained when the order of the critical points is higher. Therefore, only the second test is performed with initial condition \eqref{eq:sin_in_sin}, which features two first-order critical points. We furthermore set $c_2 = c_0=2$ and $\varepsilon = 10^{-40}$. The results are given in Table~\ref{tab:convergence_EZ} and once again show optimal convergence.

\begin{table}[h]
	\centering
	\caption{Convergence test for the embedded WENO-Z scheme using \eqref{eq:sin_in_sin} with $c_2= c_0=2$ and $\varepsilon=10^{-40}$.}
	\label{tab:convergence_EZ}
	\begin{tabular}{l|l|l}
		$N$      & error$_2$              & order$_2$  \\	 \hline 
		$101$  & $6.0 \cdot 10^{-7}$ &      \\
		$201$ & $1.8 \cdot 10^{-8}$ & $5.1$ \\
		$401$ & $5.9 \cdot 10^{-10}$ & $5.0$ \\
		$801$ & $1.8 \cdot 10^{-11}$  & $5.0$ \\
		$1601$&$6.0 \cdot 10^{-13}$ & $5.0$
	\end{tabular}
\end{table}

\subsection{Notation of embedded schemes}
As indicated earlier, the relative proportions $c_2$ and $c_0$ can be chosen independently. Thus, we may choose a fourth-order inner scheme on $S_{1,2}$, while on $S_{0,2}$ we may place the superfluous weight on the middle stencil. To clarify which scheme is being used at a particular time, we propose the following notation. We shall write the outer scheme with the relative proportions in parenthesis: WENO-JS($c_2$, $c_0$) and WENO-Z($c_2$, $c_0$). We write $c_2$ first since it affects the interpolation on the left when the discontinuity is on the right. In special cases, we shall explicitly name a scheme, such as the WENO-JS scheme with fourth-order inner scheme WENO-45 := WENO-JS(2,2).\par
The tableau notation of \eqref{eq:WENO5_tableau} can be extended to include the matrix $A$ with elements $a_{kl}$, i.e.
\begin{equation}
\begin{array}{c|c|c}
C & \boldsymbol{\gamma} & A \\ \hline
\end{array}.
\end{equation}
A tableau together with a form, \eqref{eq:EW_general_form} or \eqref{eq:EW_general_form2}, completely specifies an embedded WENO scheme. As such, the embedded WENO-JS scheme \eqref{eq:final_form} has a tableau given by
\begin{equation}
\begin{array}{ccccc|c|ccc}
\frac{2}{6} & -\frac{7}{6} & \frac{11}{6} & & & \frac{1}{10} & 3-c_2 & 0 & c_2 \\ \rule{0pt}{2.6ex}
& -\frac{1}{6} & \frac{5}{6} & \frac{2}{6} & & \frac{6}{10} & 1& 1 & 1 \\ \rule{0pt}{2.6ex}
& & \frac{2}{6} & \frac{5}{6} & -\frac{1}{6} & \frac{3}{10} & c_0 & 0 & 3-c_0\\ \hline
\end{array}\begin{matrix} \vphantom{x} \\ \vphantom{y} \\ \vphantom{z}.\end{matrix}
\end{equation}
The embedded WENO-Z scheme (with $p=2$) has a tableau
\begin{equation}
\begin{array}{ccccc|c|ccc}
\frac{2}{6} & -\frac{7}{6} & \frac{11}{6} & & & \frac{1}{10} & \frac{\sqrt{c_2}}{2} & 0 & -\frac{\sqrt{c_2}}{2} \\ \rule{0pt}{2.6ex}
& -\frac{1}{6} & \frac{5}{6} & \frac{2}{6} & & \frac{6}{10} & \frac{1}{2} & 0 & -\frac{1}{2} \\ \rule{0pt}{2.6ex}
& & \frac{2}{6} & \frac{5}{6} & -\frac{1}{6} & \frac{3}{10} & \frac{\sqrt{c_0}}{2} & 0 & -\frac{\sqrt{c_0}}{2}\\ \hline
\end{array}\begin{matrix} \vphantom{x} \\ \vphantom{y} \\ \vphantom{z}.\end{matrix}
\end{equation}

\section{Spectral properties}\label{sec:spectral}
The embedded WENO schemes may be investigated by analysing their spectral properties. The inner scheme activates whenever $\beta_0$ or $\beta_2$ become significantly larger than the other two smoothness indicators. In terms of sinusoidal functions, one would expect this to happen in the medium-range of wave numbers. Thus, the spectral properties of a WENO scheme in this regime can be improved by embedding an inner scheme.\par
As an example, we will show that we can reduce the dissipation of a WENO scheme by embedding an inner scheme. This is particularly useful when working with smooth solutions. On the other hand, when working with sharply varying or even discontinuous solutions, it may be desirable to increase dissipation to obtain greater stability.\par
One can investigate the spectral properties of a WENO scheme by analysing the underlying linear schemes \cite{martin}. We can interpret the WENO schemes as a linear combination of the underlying third-order upwind schemes, where the weights vary with the wave numbers. This way, we may find upper and lower bounds for the spectral curves. We consider here plane wave solutions to the linear advection equation,
\begin{equation}\label{eq:linear_advection}
\frac{\partial u}{\partial t} + \frac{\partial u}{\partial x} = 0.
\end{equation}
We can relate the numerical solution obtained by the underlying linear schemes to a plane wave solution $u(x,t) = \exp( \mathrm{i} (\kappa x - \omega t))$. After some manipulation, we find the modified wave number $\kappa^\ast$, which is a complex quantity related to the spectral properties of the scheme. The imaginary value of $\kappa^\star$ determines the dissipation, $\mathrm{Im}( \kappa^\ast)=0$ being a nondissipative scheme. The real part of $\kappa^\ast$ determines the phase error in the numerical approximation, thus relating to dispersion.\par
Let us first investigate the basic spectral properties of the three possible third-order approximations and the fifth-order linear combination, see Figure~\ref{fig:base_modified_wavenumber}. A WENO scheme will give a fifth-order approximation for smooth solutions, while a third-order approximation for rapidly varying solutions. Thus, we expect the WENO scheme to follow the fifth-order curves for low wave numbers and the third-order curves for high wave numbers. These considerations give us a qualitative understanding of WENO methods. The embedded methods will switch to their inner scheme for mid-range wave numbers.

\begin{figure}[h]
\centering
\includegraphics[width=\textwidth]{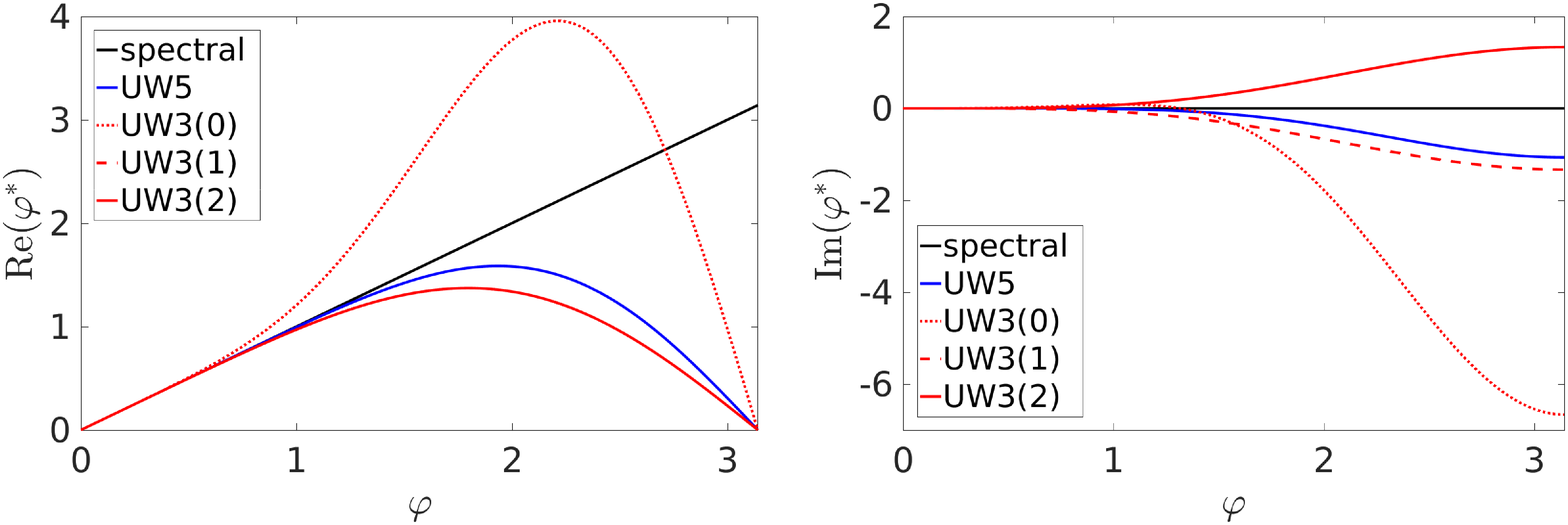}
\caption{Dissipation (left) and dispersion (right) curves for the underlying linear schemes integrated with SSPRK(3,3). UW5 indicates the fifth-order upwind approximation. The label in parenthesis for the UW3 schemes indicate on what stencil it works, thus UW3($i$) works on $S_i$. The dispersion curves for UW3(1) and UW3(2) are equal.}
\label{fig:base_modified_wavenumber}
\end{figure}

Let us now study the inner schemes, which are four-point linear schemes given by \eqref{eq:inner_scheme}, completed by Table \ref{tab:inner_scheme_weights}. The resulting dispersion and dissipation curves are presented in Figure~\ref{fig:inner_modified_wavenumber}. An important thing to note is that all inner schemes do not support parasitic wave modes, since $\mathrm{Im}( \varphi^\ast) \leq 0$ across the whole range.

\begin{figure}[h]
\centering
\includegraphics[width=\textwidth]{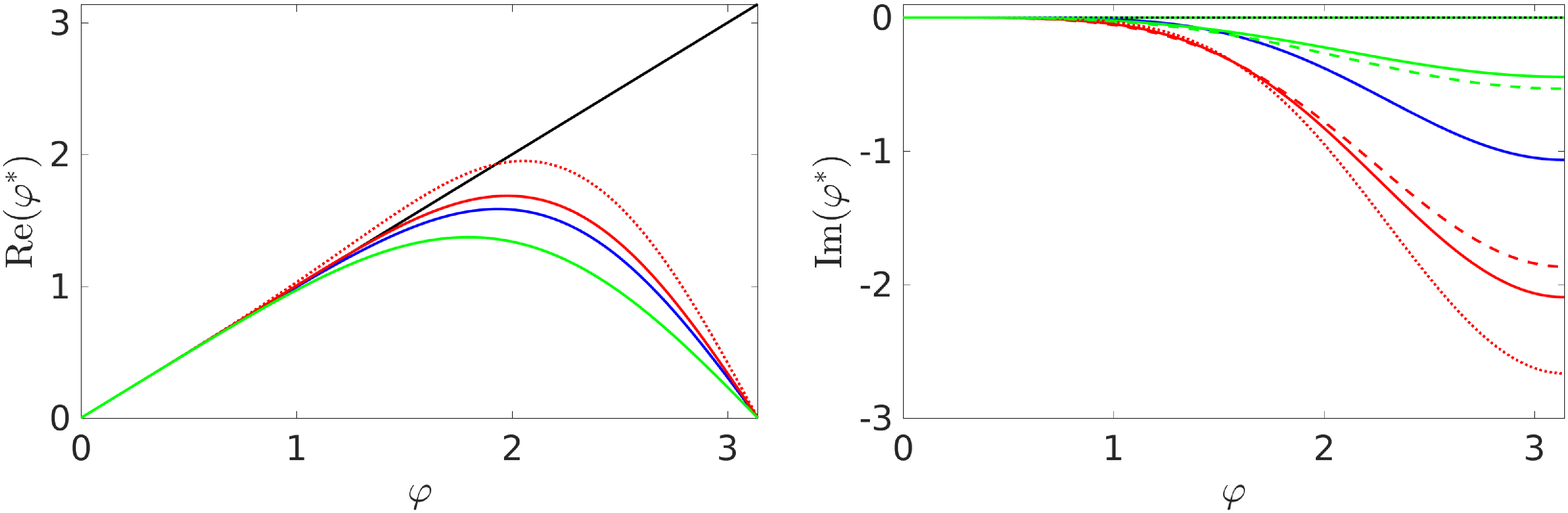}
\caption{Dissipation (left) and dispersion (right) curves for the inner scheme. The blue curves are the fifth-order curves, the red curves are schemes working on $S_0 \cup S_1$ and the green curves work on $S_1 \cup S_2$. The solid curves represent $c_0 = c_2 = 1$, the dashed curves represent $(c_2,c_0) = (\tfrac{2}{3},\tfrac{6}{7})$ and the dotted curves represent $c_0 = c_2 = 2$.}
\label{fig:inner_modified_wavenumber}
\end{figure}

What becomes clear from the curves is that one can certainly influence the spectral properties of the scheme. Also, it should be noted that the curve corresponding to $c_2 = \tfrac{2}{3}$ gives a dispersion curve which is equal to the fifth-order dispersion curve. Therefore, this justifies this particular choice, as it reduces the phase difference near discontinuities.\par
The influence of inner schemes is demonstrated by some numerical examples. We solve the linear advection equation~\eqref{eq:linear_advection} using several variants of WENO schemes on a periodic domain with the initial condition a complex exponential plane wave. We pick as the wave number $\kappa = 10 \pi$, thus the initial condition is given by
\begin{equation}
u_0(x) = e^{i 10 \pi x},
\end{equation}
with the computational domain $-1 \leq x \leq 1$. As the initial condition is smooth, we shall use the inner scheme to reduce dissipation. Hence, we shall compare the standard WENO-JS and WENO-Z schemes to the WENO-JS($\tfrac{2}{3}$,$2$) and WENO-Z($\tfrac{2}{3}$,$2$) variants. For the embedded WENO-Z scheme, we set the free parameter $\mu = \frac{1}{4}$ in \eqref{eq:WENO-Z_embedded}. The advection equation is integrated for 64 time units using the SSPRK(3,3) method and examine the amplitude of the numerical solutions, see Figure~\ref{fig:sine_wave_period10}. For all these computations, $\varepsilon = 10^{-12}$.

\begin{figure}[h]
\centering
\includegraphics[width = .9\textwidth]{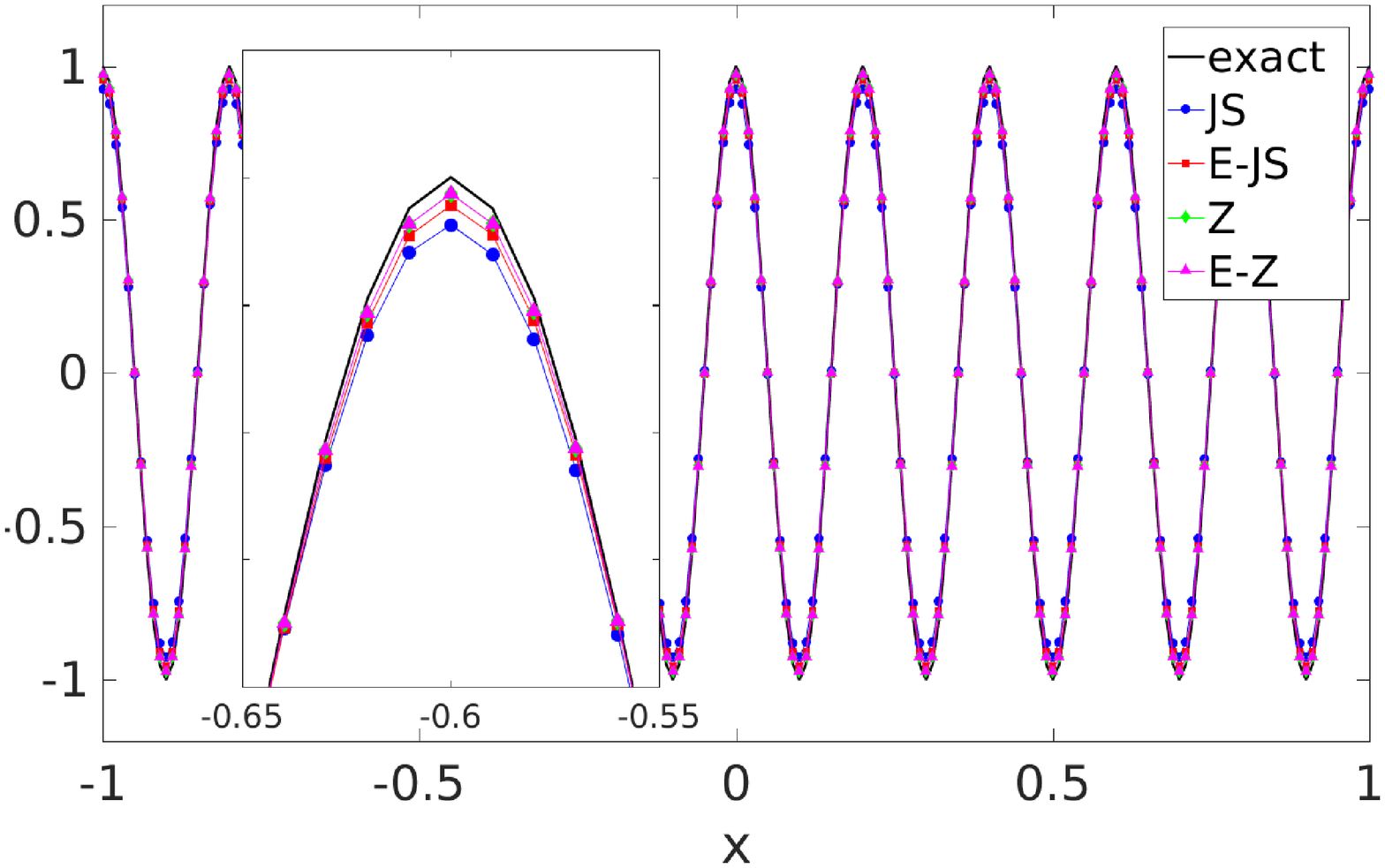}
\caption{Real part of the complex exponential plane wave with wave number $\kappa = 10 \pi$ at $t=8$ on a periodic domain with 201 grid points and a CFL number of $0.5$, resulting in $800$ time steps.The WENO-Z solution is equal to the embedded WENO-Z solution.}
\label{fig:sine_wave_period10}
\end{figure}

All the WENO schemes are, in this case, still solving with fifth-order accuracy in most of the domain. For this wave number, dispersive effects are at a minimum for all schemes. Dissipative effects are noticeable, though small, with WENO-JS being the most dissipative. Embedded WENO-JS($\frac{2}{3}$,$2$) clearly has less dissipation as intended. WENO-Z and its embedded version have the same amount of dissipation, being both the least dissipative of the four presented schemes.\par
We will now investigate a higher wave number. We use the initial condition
\begin{equation}
u_0(x) = e^{i 20 \pi x},
\end{equation}

The results are plotted in Figure~\ref{fig:sine_wave_period20}. Now, the dissipative effects are more pronounced, with WENO-JS having damped out the wave significantly. The embedded WENO-JS($\frac{2}{3}$,$2$) scheme clearly has less dissipation than its standard counterpart. For WENO-Z, nonlinear effects are starting to become clear, as different parts of the domain are affected differently. For the embedded version, the wave has kept its form much better, i.e., there is less dissipation and the dispersion error is smaller.

\begin{figure}[h]
\centering
\includegraphics[width = .9\textwidth]{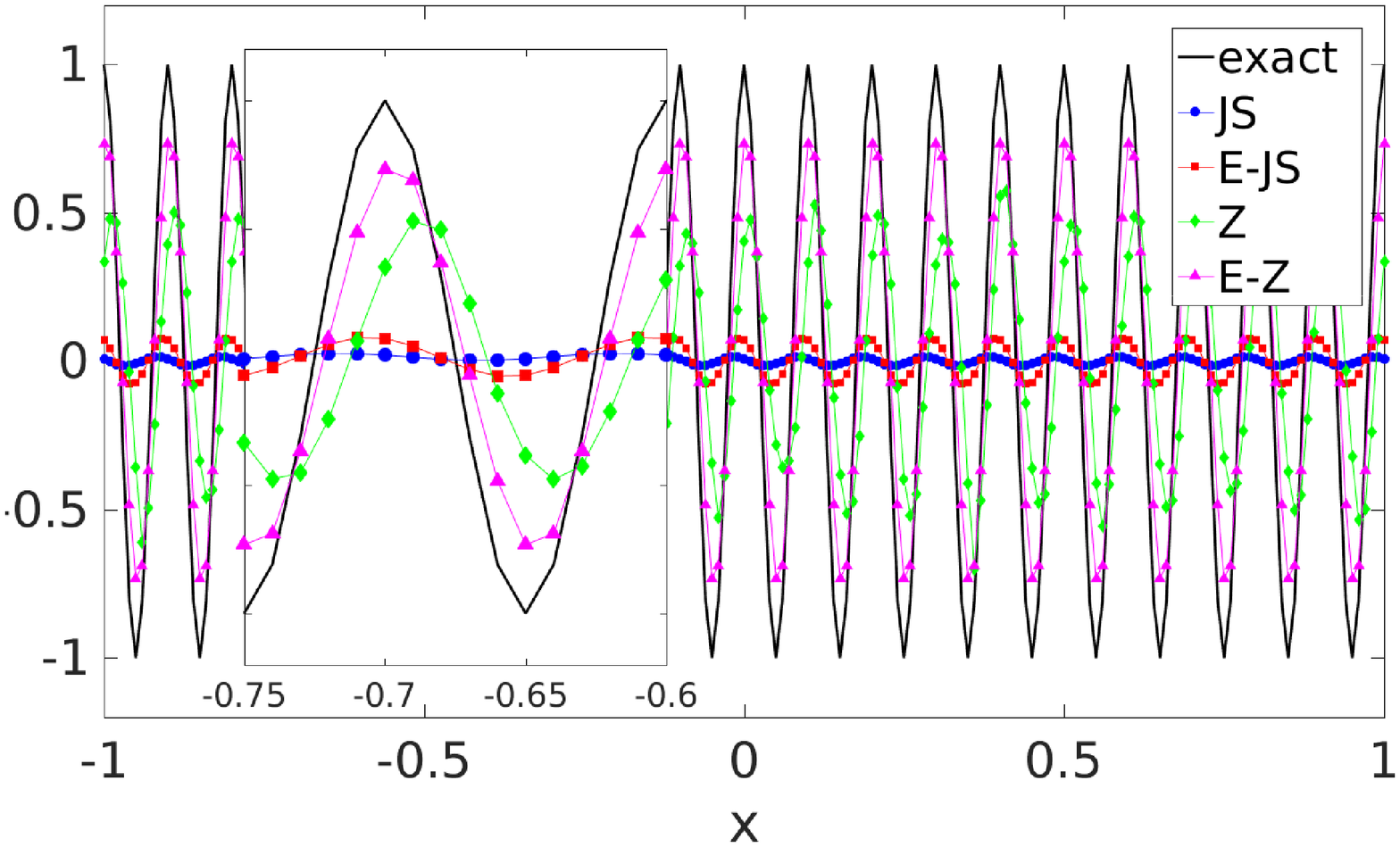}
\caption{Real part of the complex exponential plane wave with wave number $\kappa = 20 \pi$ at $t=4$ on a periodic domain with 201 grid points and a CFL number of $0.5$, resulting in 800 time steps.}
\label{fig:sine_wave_period20}
\end{figure}

\section{Numerical experiments}\label{sec:numerical_experiments}
As a final demonstration of the embedded WENO methods, we will perform some numerical experiments. 
An example of a scalar hyperbolic equation and an example of a hyperbolic system are presented. First, we shall take the linear advection equation with constant velocity field. Second, several cases of the Euler equations are numerically solved. In all examples we compare the embedded methods to their original counterparts. In all examples, we have used $\varepsilon = 10^{-12}$.

\subsection{Linear advection equation}
We consider the linear advection equation, i.e.,
\begin{equation}
\frac{\partial u}{\partial t} + \frac{\partial u}{\partial x} = 0,
\end{equation}
on $x\in [-1,1]$ with final time $t=2$. We use periodic boundary conditions, such that the initial condition is transported for one period and ends up where it started. Thus, the final state is equal to the initial condition, i.e., $u(x,2) = u_0(x)$. As an initial condition, we take the fairly standard test which uses a Gaussian, a square, a triangle and half an ellipse. This setup is sometimes referred to as the Shu linear test, introduced in \cite{jiang_shu}. The initial condition is given by
\begin{subequations}\label{eq:shu_initial}
\begin{equation}
u_0(x) = \begin{cases}
\tfrac{1}{6} \left( G(x;\beta,z-\epsilon) + G(x;\beta,z+\epsilon) + G(x;\beta,z) \right) & -0.8 \leq x \leq -0.6,\\
1 & -0.4 \leq x \leq -0.2,\\
1- |10(x-0.1)| & 0\leq x\leq 0.2,\\
\tfrac{1}{6} \left( F(x;\alpha,a-\epsilon) + F(x;\alpha,a+\epsilon) + 4 F(x;\alpha,a) \right) & 0.4 \leq x \leq 0.6,\\
0 & \text{otherwise},
\end{cases}
\end{equation}
where $G$ and $F$ are given by
\begin{align}
G(x;\beta,z) &:= \exp \big( - \beta(x-z)^2 \big),\\
F(x;\alpha,z) &:= \sqrt{\max \big( 1- \alpha^2(x-a)^2,0 \big) }.
\end{align}
\end{subequations}
The parameters are as follows: $z = -0.7$, $a = 0.5$, $\alpha = 10$, $\epsilon = \frac{1}{200}$, and $\beta
 = \frac{\ln 2}{36 \epsilon^2}$. One of the pervasive features of this test is the compact support of the initial condition. In fact, the shapes have non-overlapping supports. Thus, we need the numerical solutions to converge to zero as quickly as possible in between each shape. Thus, the third-order choice where the superfluous weight is shifted to the middle stencil offers the best choice heuristically. However, this will also provide more dissipation compared to the other options.\par
Let us start with the embedded schemes that use WENO-JS as its outer scheme, we expect to see better performance near discontinuities. Moreover, we also expect discontinuities in the first derivative to be captured better. The Shu linear test has both types of discontinuities, as well as smooth transitions. The embedded schemes switch to their inner schemes close to the edge of the support of each shape, and hence are able to capture it better, see Figure~\ref{fig:standard_advection_JS}.

\begin{figure}[h]
\centering
\includegraphics[width=\textwidth]{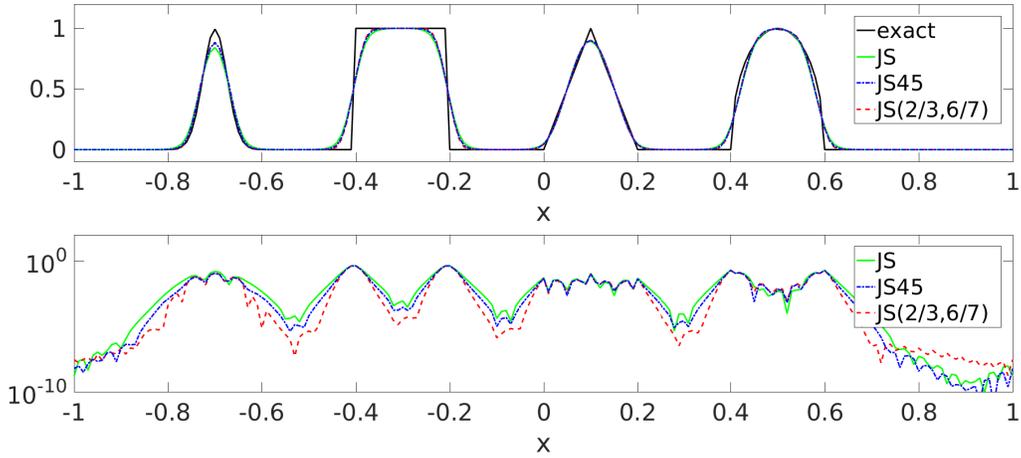}
\caption{Results for the linear advection equation (top) and errors (bottom). Shown are WENO-JS (green) and its embedded variants, WENO-45 (blue) and WENO-JS($\tfrac{2}{3}$,$\tfrac{6}{7}$) (red). Solutions were computed on a periodic domain at $t = 2$, using 201 grid points and a CFL number of $0.5$, resulting in 400 time steps.}
\label{fig:standard_advection_JS}
\end{figure}

It becomes clear from Figure~\ref{fig:standard_advection_JS} that the embedded WENO schemes perform better in almost every part of the domain. However, it should be noted that the WENO-45 scheme seems to perform best within each smoothly varying region, whereas the WENO-JS($\tfrac{2}{3}$,$\tfrac{6}{7}$) scheme captures the compact support of each shape the best. That is, the WENO-JS($\tfrac{2}{3}$,$\tfrac{6}{7}$) scheme seems to decay to zero the fastest in between each shape. However, the WENO-45 scheme has less dissipation and captures the maxima better in general. Interestingly, the WENO-JS scheme give the best representation of the peak of the triangle. However, in all other parts of the triangle, the embedded schemes provide a smaller error.\par
Next, we shall examine the performance of embedded schemes with the WENO-Z scheme as the outer scheme. By the same argument as presented previously, we expect the embedded schemes to perform better near discontinuities in the solution and its derivative. The results are plotted in Figure~\ref{fig:standard_advection_Z}. The figures show how the WENO-Z($\tfrac{2}{3}$,$\tfrac{6}{7}$) captures the compact support of the shapes the best. Again, this variant decays the fastest to zero in the space between the shapes.

\begin{figure}[h]
\centering
\includegraphics[width=\textwidth]{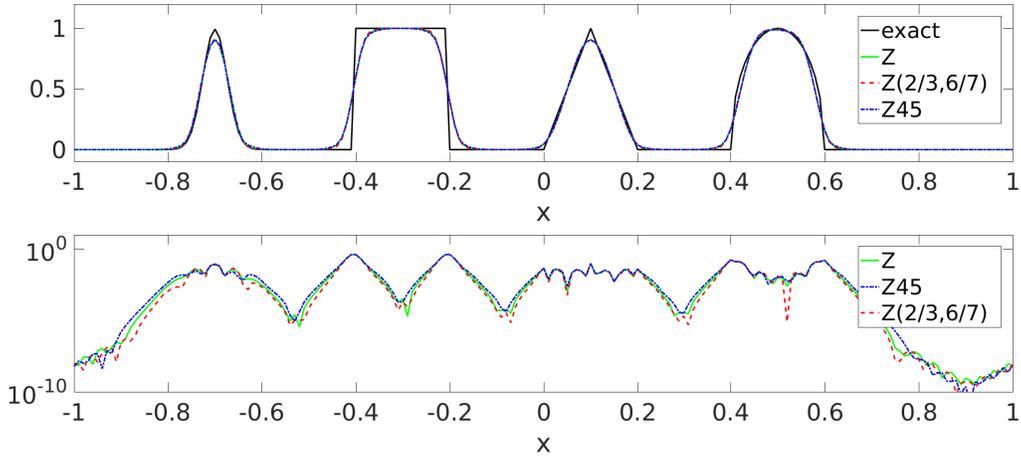}
\caption{Results for the linear advection equation (top) and errors (bottom). Shown are WENO-Z (green) and its embedded variants, WENO-Z45 (blue) and WENO-Z($\tfrac{2}{3}$,$\tfrac{6}{7}$) (red). Solutions were computed on a periodic domain at $t = 2$, using 200 grid points and a CFL number of $0.5$.}
\label{fig:standard_advection_Z}
\end{figure}

\subsection{Euler equations}\label{sec:euler}
Finally, we consider the one-dimensional Euler equations for ideal gases, i.e.,
\begin{equation}
\frac{\partial}{\partial t} \begin{pmatrix}
\rho \\
\rho u \\
E
\end{pmatrix} + \frac{\partial}{\partial x} \begin{pmatrix}
\rho u \\
\rho u^2 + p \\
u(E + p)
\end{pmatrix} = 0,
\end{equation}
with $\rho$ the density, $u$ the fluid velocity, $E$ the total energy and $p$ the pressure. We furthermore use the ideal caloric equation of state
\begin{equation}
E = \tfrac{1}{2} \rho u^2 + \frac{p}{\gamma-1} ,
\end{equation}
where $\gamma$ is the ratio of specific heats, which we fixed to $\gamma  = 1.4$ throughout. We employ the global Lax-Friedrichs flux splitting to construct the numerical flux and use the total variation diminishing Runge-Kutta time integrator from \eqref{eq:TVD-RK}. We shall first consider a collection of Riemann problems such as Sod's test, Lax's test and the 123-problem. We shall furthermore consider an interacting blast-wave problem of Woodward and Collela \cite{woodward} and the Mach-3 density-wave shock interaction problem of Shu and Osher \cite{shu_osher}.\par
In all examples of the Euler equations, at the beginning of every time step the maximum absolute eigenvalue of the Jacobian of the flux is computed and used to compute the time step size. This enforces that throughout the whole computation, the CFL number is kept at $0.4$. The WENO schemes were applied characteristic-wise to find the values on the cell edges. The Jacobian on the cell edge is computed using the arithmetic mean.\par 
We have compared the standard WENO-JS and WENO-Z schemes to their embedded variants WENO-JS($\tfrac{2}{3}$,$\tfrac{6}{7}$) and WENO-Z($\tfrac{2}{3}$,$\tfrac{6}{7}$) respectively, as these variants seemed to perform better for discontinuous solutions. As all the examples we cover contain discontinuities or sharp gradients of some kind, this is a natural choice.

\subsubsection{Riemann problems}
We consider here the Riemann problems of Sod's test, Lax's test and the 123-problem. A Riemann problem features a discontinuous initial condition with two states, i.e.,
\begin{equation}
(\rho,u,p) = \begin{cases}
(\rho_l,u_l,p_l) & \text{if } x<0,\\
(\rho_r,u_r,p_r) & \text{if } x>0.
\end{cases}
\end{equation}
This is the simplest possible non-trivial type of initial condition and for the Euler equations these types of problems can be solved exactly. Shock tube problems are a special type of Riemann problem with zero fluid velocity $u$ everywhere. All the Riemann problems are solved with $201$ grid points such that the initial discontinuity is exactly on a cell edge.\par
Sod's test is a shock tube problem with initial condition
\begin{equation}
(\rho,u,p) = \begin{cases}
(1,0,1) & \text{if } x<0,\\
(0.125,0,0.1) & \text{if } x>0,
\end{cases}
\end{equation}
with a final time of $t = 0.4$. We use as computational domain $x \in [-1,1]$, and so we use non-reflective boundaries at the edges of the domain. Sod's test is a very mild test, the exact solution consists of a left rarefaction wave, a contact discontinuity and a right shock.

\begin{figure}[h]
\centering
\includegraphics[width=\textwidth]{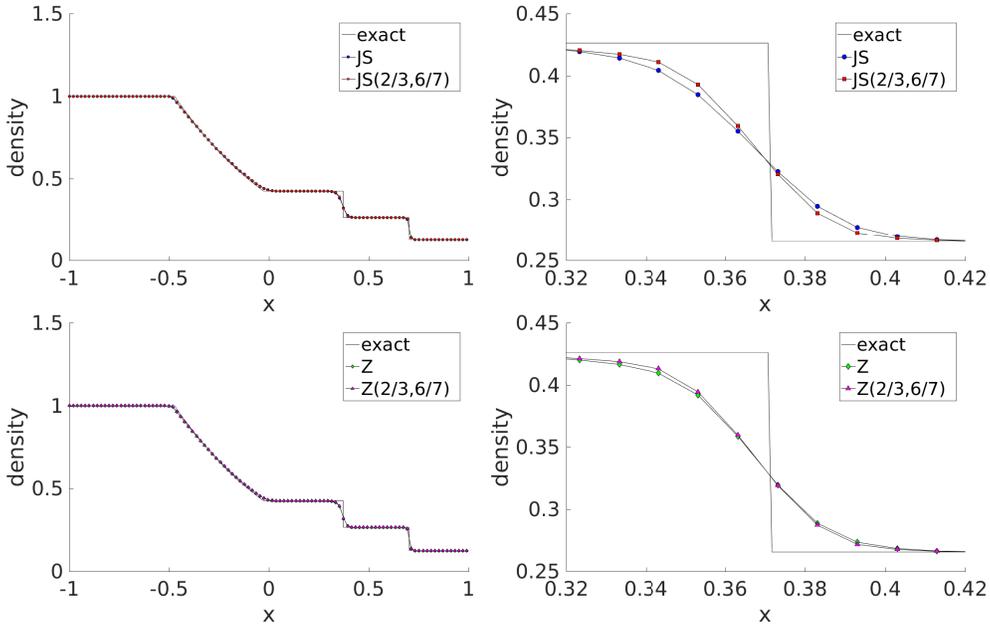}
\caption{The solution of Sod's test (left) and the result zoomed in on the contact discontinuity (right). The top figures show WENO-JS and WENO-JS($\frac{2}{3}$,$\frac{6}{7}$) while the bottom figures show WENO-Z and WENO-Z($\frac{2}{3}$,$\frac{6}{7}$). The numerical solutions were computed using 201 grid points and a CFL number of 0.4. The left-side plots are plotted with half the grid points for clarity. }
\label{fig:Sod}
\end{figure}

Figure~\ref{fig:Sod} shows that the embedded WENO schemes give a solution which is globally similar, as intended. However, zooming in on smaller features, the differences become clear. The embedded schemes give a slightly sharper gradient near the contact discontinuity as compared to their standard counterparts.\par
Lax's test has initial conditions
\begin{equation}
(\rho,u,p) = \begin{cases}
(0.445,0.689,3.528) & \text{if } x<0,\\
(0.5,0,0.5710) & \text{if } x>0,
\end{cases}
\end{equation}
with a final time of $t=0.25$. The exact solution again consists of a left rarefaction wave, a contact and a right shock. However, unlike Sod's test, the contact discontinuity has a rather large jump.

\begin{figure}[h]
\centering
\includegraphics[width=\textwidth]{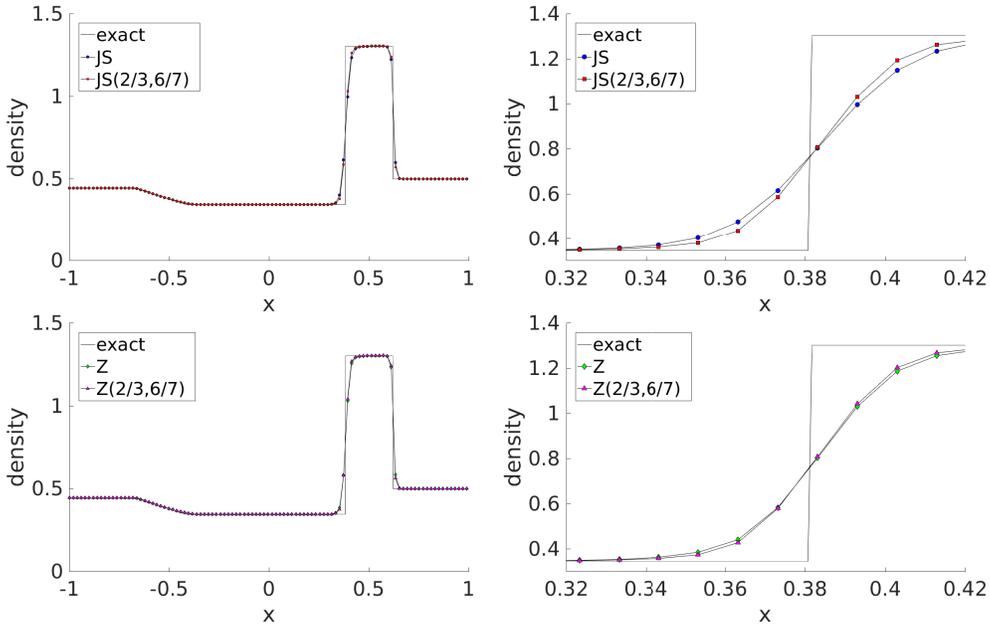}
\caption{The solution of Lax's test (left) and the result zoomed in on the contact discontinuity (right). The top figures show WENO-JS and WENO-JS($\frac{2}{3}$,$\frac{6}{7}$) while the bottom figures show WENO-Z and WENO-Z($\frac{2}{3}$,$\frac{6}{7}$). The numerical solutions were computed using 201 grid points and a CFL number of 0.4. The left-side plots are plotted with half the grid points for clarity. }
\label{fig:Lax}
\end{figure}

Also here, Figure~\ref{fig:Lax} shows how globally the embedded schemes give a similar solution. As Lax's test features a rather large jump in the contact discontinuity, we will zoom in on that part of the solution. Again, we see that both embedded schemes give a better representation of the discontinuity.\par
The 123-problem has initial conditions
\begin{equation}
(\rho,u,p) = \begin{cases}
(1.0,-2.0,0.4) & \text{if } x<0,\\
(1.0,2.0,0.4) & \text{if } x>0,
\end{cases}
\end{equation}
with a final time of $t=0.25$. The solution consists of two strong rarefactions with a trivial stationary contact, the pressure in between the two rarefaction waves is very low. As the exact solution consists of two rarefaction waves, the solution is rather smooth and hence the schemes perform roughly the same. The embedded schemes capture the transition between rarefaction wave and left or right state slightly better, see Figure~\ref{fig:123}.

\begin{figure}[h]
\centering
\includegraphics[width=\textwidth]{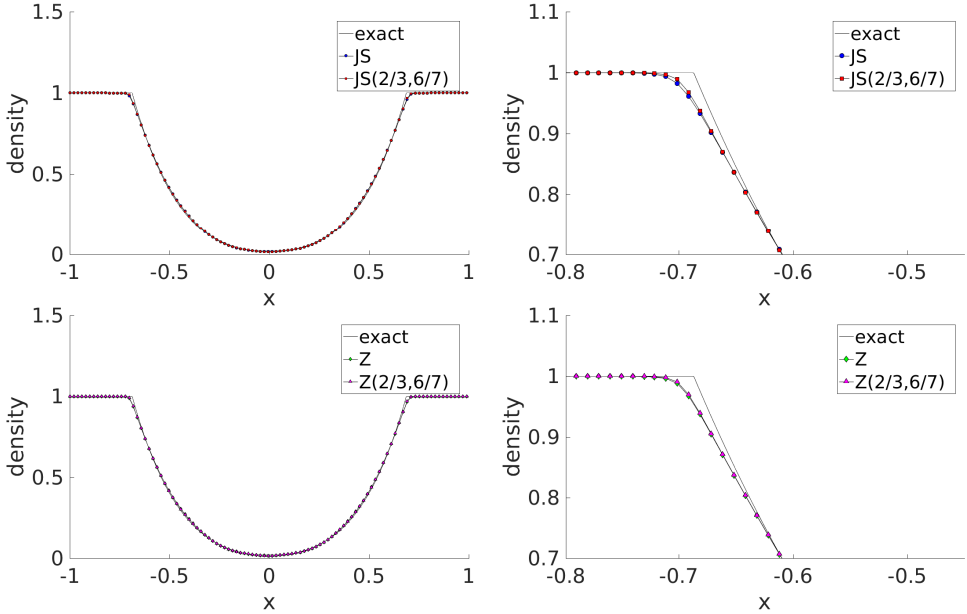}
\caption{The solution of the 123-problem (left) and the result zoomed in on the rarefaction wave (right). The top figures show WENO-JS and WENO-JS($\frac{2}{3}$,$\frac{6}{7}$) while the bottom figures show WENO-Z and WENO-Z($\frac{2}{3}$,$\frac{6}{7}$). The numerical solutions were computed using 201 grid points and a CFL number of 0.4. The left-side plots are plotted with half the grid points for clarity.}
\label{fig:123}
\end{figure}

\subsubsection{Interacting blast-waves}
Here we consider a problem featuring two interacting blast-waves proposed by Woodward and Colella \cite{woodward}. The computational domain is now $x \in [0,1]$ with reflective boundaries. The initial conditions have unit density and zero velocity in the entire domain. The pressure is set at 0.01 except for two small regions, where a very high pressure is present, i.e.,
\begin{equation}
(\rho,u,p) = \begin{cases}
(1.0,0.0,1000) & \text{if } 0 \leq x \leq 0.1,\\
(1.0,0.0,100) & \text{if } 0.9 \leq x \leq 1,\\
(1.0,0.0,0.01) & \text{otherwise}.
\end{cases}
\end{equation}
The final time is set to $t=0.038$. Both high pressure regions create blast-waves travelling outwards, which are reflected at the boundaries and immediately directed inwards. Complicated shapes in the density form before the blast-waves meet and interact. As there is no exact solution to this particular problem, we employed Godunov's method with $2 \cdot 10^4$ grid points and $10^5$ time steps to compute the reference solution.

\begin{figure}[h]
\centering
\includegraphics[width=\textwidth]{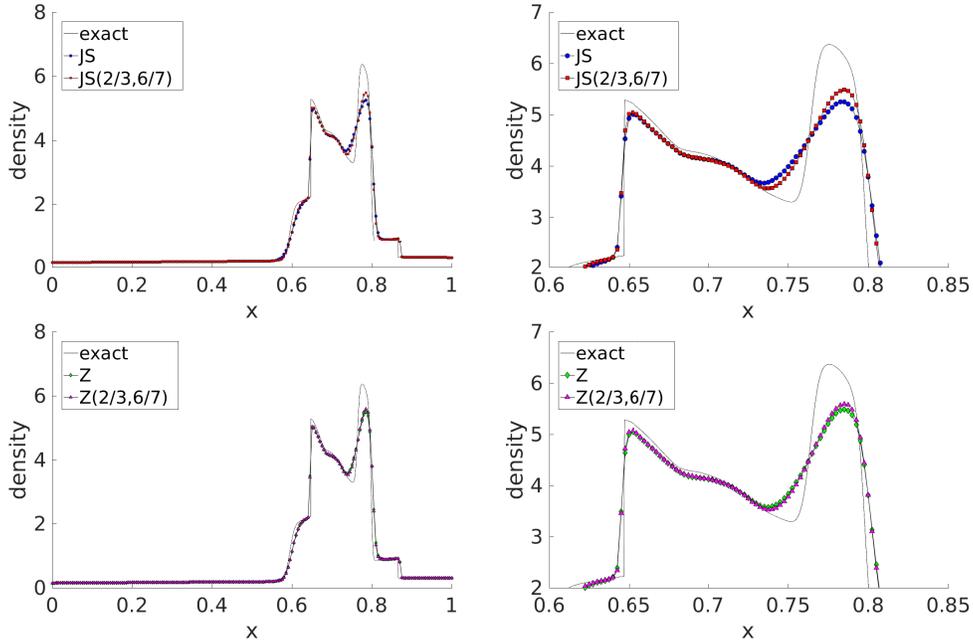}
\caption{The solution of the interacting blast-waves problem (left) and the result zoomed in on the interaction region (right). The top figures show WENO-JS and WENO-JS($\frac{2}{3}$,$\frac{6}{7}$) while the bottom figures show WENO-Z and WENO-Z($\frac{2}{3}$,$\frac{6}{7}$). The numerical solutions were computed using 400 grid points and a CFL number of 0.4. The left-side plots are plotted with half the grid points for clarity. }
\label{fig:WC}
\end{figure}

In this example, it is natural to focus attention on the region where the two blast-waves interact, see Figure~\ref{fig:WC}. In both cases, we see that the embedded schemes have higher peaks and lower valleys, and are thus closer to the reference solution.

\subsubsection{Mach-3 shock density-wave interaction}
The final problem under consideration is the Mach-3 shock density-wave interaction proposed by Shu and Osher \cite{shu_osher}. The initial conditions are given by
\begin{equation}
(\rho,u,p) = \begin{cases}
(3.857,2.629,10.333) & \text{if } x<0,\\
(1+\epsilon \sin(5x) ,0,1.0) & \text{if } x>0.
\end{cases}
\end{equation}
The integration time is $t=1.8$. If $\epsilon$ is set to zero, this is a Riemann problem with the solution being a pure Mach-3 shock wave travelling to the right. However, $\epsilon$ is set to 0.2, resulting in the right state being a regular density wave. Again, no exact solution is available, hence we use a numerical solution computed on a very fine grid, in this case the WENO-JS scheme with 2000 grid points.

\begin{figure}[h]
\centering
\includegraphics[width=\textwidth]{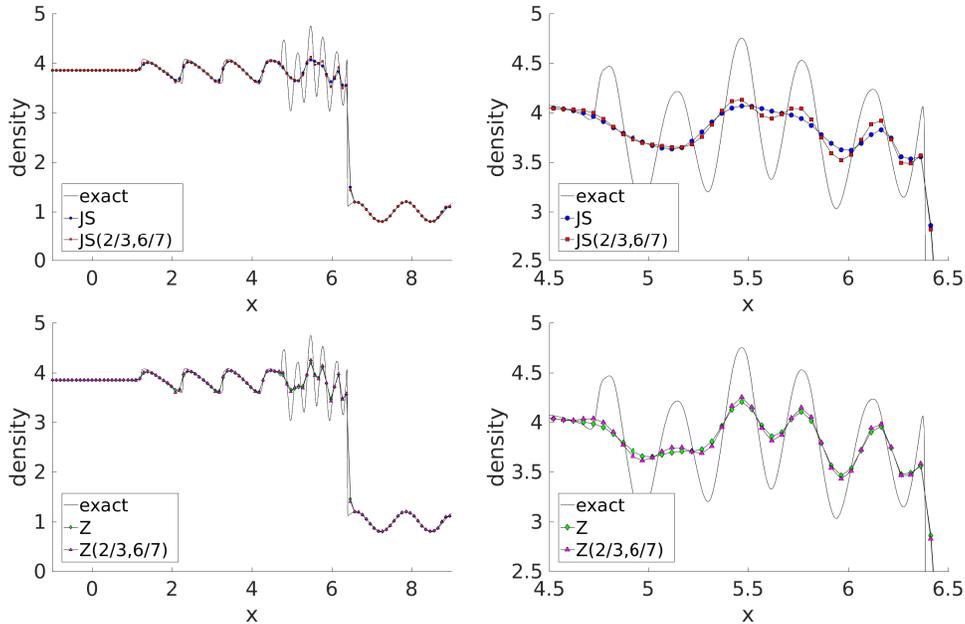}
\caption{The solution of the Mach-3 shock density-wave interaction test (left) and the result zoomed in on the interaction region (right). The top figures show WENO-JS and WENO-JS($\frac{2}{3}$,$\frac{6}{7}$) while the bottom figures show WENO-Z and WENO-Z($\frac{2}{3}$,$\frac{6}{7}$). The numerical solutions were computed using 201 grid points and a CFL number of 0.4. The left-side plots are plotted with half the grid points for clarity.}
\label{fig:M3DWSI_JS}
\end{figure}

In this final example, presented in Figure~\ref{fig:M3DWSI_JS}, we again see how the embedded schemes provide an improvement over their more standard-issue counterparts. With the WENO-JS scheme, the high-frequency density waves close to the shock are hardly captured, whereas the embedded scheme shows some more detail. Furthermore, the embedded scheme does a better job of capturing the low-frequency waves more to the left of the shock.\par
Similar things may be said about WENO-Z and its embedded version. Naturally WENO-Z shows more detail than WENO-JS, while the WENO-Z($\frac{2}{3}$,$\frac{6}{7}$) shows even more detail in the high-frequency region and less flattening in the low-frequency region.

\section{Conclusion and future work}\label{sec:conclusion}
We have introduced a design strategy for improving existing WENO weights and with it a new type of WENO methods, which we have named the embedded WENO methods. We have outlined a general approach that allows one to adapt the nonlinear weights of an existing WENO method. The overall, original, WENO method was called the outer method, while the adjustment was dubbed the inner method. The inner method takes over when several adjacent substencils are smooth while there is a discontinuity present in the larger stencil. In such regions, conventional WENO schemes, such as WENO-JS and WENO-Z, revert to their lower-order mode. This is slightly overzealous, as one has more than one smooth substencil to work with. The embedded WENO schemes switch to their inner scheme in these cases. This allows more control over the numerical solution, for instance by attaining a higher order of convergence.\par
A framework was presented along with four conditions that we dubbed the implementation, nonlocality, consistency and embedding conditions. The implementation and nonlocality conditions led us to the general forms. The consistency and embedding conditions provide equations for the coefficients of the correction when dealing with a particular WENO scheme. In this manner, we have explicitly constructed embedded schemes based on the five-point WENO-JS and the WENO-Z method.\par
We have demonstrated through spectral analysis and several numerical experiments the benefits of the embedded WENO schemes over their corresponding standard methods. All numerical examples show the same properties: equal or better performance in smooth regions and better performance near discontinuities in the solution and its derivative. We have also demonstrated that the spectral properties of a WENO scheme can be improved by converting it to an embedded version.\par
Whereas here we have applied our embedding strategy to the WENO-JS and WENO-Z schemes, we expect similar results when it is applied to other schemes. Our framework was presented in the context of five-point WENO schemes, but the consistency equations are easily generalised and the embedding equations were derived in a more general setting. In fact, we have constructed and demonstrated seven-point embedded WENO schemes elsewhere \cite{vanlith}. Therefore, we foresee no significant obstructions when applying the embedding strategy to other schemes.\par

\section*{Acknowledgements}
This work was generously supported by Philips Lighting and the Intelligent Lighting Institute. The authors would like to thank the referees for their helpful comments. The authors would like to thank especially Wai-Sun Don and Rafael Borges for the helpful discussions and usage of their Matlab code.


\end{document}